\newcommand{\calD}{\mathcal{D}}
\newcommand{\calF}{\mathcal{F}}
\newcommand{\calG}{\mathcal{G}}
\newcommand{\calH}{\mathcal{H}}
\def\To{\longrightarrow}
\def\eps{\varepsilon}
\def\wh{\widehat}
\def\wt{\widetilde}
\def\inprobto{{\buildrel P\over \To}}
\def\G{\mathcal{G}}
\def\F{\mathcal{F}}
\def\FF{\mathbb F}
\def\GG{\mathbb G}
\def\CC{\mathbb C}
\def\ZZ{\mathbb Z}
\def\RR{\mathbb R}
\def\EE{\mathbb{E}}
\def\PP{\mathbb{P}}
\def\1{\mathbbm{1}}
\def\GG{\mathbb{G}}
\def\UU{\mathbb{U}}
\def\ZZ{\mathbb{Z}}
\def\wh{\widehat}
\def\u{\mathbf{u}}
\def\v{\mathbf{v}}
\def\x{\mathbf{x}}
\def\y{\mathbf{y}}
\def\t{\mathbf{t}}
\def\X{\mathbf{X}}
\def\Y{\mathcal{Y}}
\def\1{\mathbf{1}}
\def\a{\mathbf{a}}
\def\b{\mathbf{b}}
\def\c{\mathbf{c}}
\def\d{\mathrm{d}}
\def\z{\mathbf{z}}
\newtheorem{theorem}{Theorem}%[section]
\newtheorem{lemma}[theorem]{Lemma}
\newtheorem{proposition}[theorem]{Proposition}
\newtheorem{cor}[theorem]{Corollary}
\newtheorem{remark}[theorem]{Remark}
\title{Weak convergence of  empirical copula processes indexed by functions}
\author{Dragan Radulovi\'c\\ {\small  Department of Mathematics, Florida Atlantic University}\\
Marten Wegkamp\\ {\small  Department of Mathematics \&
Department of Statistical Science, Cornell University}\\
Yue Zhao\\ {\small Department of Statistical Science, Cornell University}}
\date{\today}
\begin{document}

\maketitle \pagestyle{myheadings} \markboth{}{}

\begin{abstract}
Weak convergence of the empirical copula process indexed by a class of functions is established.
Two scenarios are considered in which either
 some smoothness of these functions or smoothness of the underlying copula function is required.\\
A novel integration by parts formula for multivariate, right continuous functions of bounded variation, which is perhaps of independent interest, is proved.
It is a key ingredient in proving weak convergence of a general empirical process  indexed by functions of bounded variation.\\

\noindent Running title: Weak convergence of empirical copula processes\\

 \noindent
MSC2000 Subject classification: {Primary 60F17 ; secondary 26B20, 26B30, 60G99.}\\

\noindent Keywords and phrases: {Donsker classes, empirical copula process,  integration by parts, multivariate functions of bounded variation,  weak convergence.}

\end{abstract}

\baselineskip=18pt

%%\begin{document}

\baselineskip=18pt

\section{Introduction}
Let $F$ be a distribution function in $\RR^d$ with continuous marginals $F_j$, $j=1,\ldots,d$ and copula function $C$.
Given an i.i.d. sample $\X_1,\ldots,\X_n$ from $F$,
we can construct the empirical distribution function $$\FF_n(\x) = \frac1n \sum_{i=1}^n 1\{ \X_i \le \x\},\  \x\in\RR^d,$$ with marginals
$\FF_{nj}$, $j\in\{1,\ldots,d\}$.
 The empirical copula function is defined by
\begin{eqnarray}
\CC_n(\u) &=& \FF_n( \FF_{n1}^{-} (u_1),\ldots, \FF_{nd}^{-}(u_d) ),\qquad \u=(u_1,\ldots,u_d) \in [0,1]^d
\end{eqnarray}
and the (ordinary) empirical copula process is given by
\begin{eqnarray}
\sqrt{n} (\CC_n -C)(\u), \qquad \u\in[0,1]^d.
\end{eqnarray}
 Weak convergence of the empirical copula process is well studied, see Stute~(1984), G\"anssler \& Stute~(1987), Fermanian et al.~(2004). Segers~(2012) obtained weak convergence under the weak condition that  the first-order partial derivatives of the copula $C$ exist and are continuous on the interior of the unit hypercube. He  slightly relaxed the condition used in Fermanian et al.~(2004) that required existence and  continuity of the first-order partial derivatives of $C$ on the {\em entire} hypercube.
This  is a sharp  condition as Theorem~4 of Fermanian et al.~(2004) shows that the empirical copula process no longer converges if the continuity of any of the $d$ first-order partial derivatives fails at a point  $\u\in (0,1)^d$.  
B\"ucher, Segers and Vogulshev (2014) use a weaker semi-metric on $\ell^\infty([0,1]^d)$ and obtain {\em hypi}-convergence of the empirical copula process,
under the condition that the set of points in $[0,1]^d$ where the partial derivatives of the copula $C$ exist and are continuous has Lebegue measure one.
They show that  this  convergence  still implies  weak convergence of certain Cram\'er-von Mises test statistics. \\

While it can be verified that  $\CC_n$ is left-continuous with right-hand limits, its cousin
\begin{eqnarray}
\bar \CC_n(\u) &=& \frac{1}{n} \sum_{i=1}^n 1\{ \FF_{n1}(X_{i1})\le u_1,\ldots,\FF_{nd}(X_{id})\le u_d \}, \qquad \u=(u_1,\ldots,u_d)\in [0,1]^d
\end{eqnarray}
is  {\em c\`adl\`ag} (right-continuous with left-hand limits) and as such a more standard object in probability  theory  and, in particular,  Lebesgue-Stieltjes integration. The empirical copula processes $\sqrt{n} (\CC_n -C)(\u)$ and $\sqrt{n} (\bar \CC_n -C)(\u)$ are asymptotically equivalent as
\begin{eqnarray}
\sup_{\u\in[0,1]^d} \left|\sqrt{n} (\CC_n -C)(\u)-\sqrt{n} (\bar \CC_n -C)(\u) \right| \le \frac{2}{\sqrt{n}},
\end{eqnarray}
as pointed out by  Fermanian et al.~(2004, proof of Theorem~6), and hence the process
\begin{eqnarray}\label{wc:bar C}
 \sqrt{n} (\bar \CC_n -C)(\u) , \qquad \u\in[0,1]^d
\end{eqnarray}
converges weakly in $\ell^\infty([0,1]^d)$ under the same weak assumptions as in Segers~(2012).\\

This paper addresses the following question: {\em Can we generalize the empirical copula process to a process indexed by functions on the unit hypercube, rather than points in the unit hypercube? }
We consider the  generalization
\begin{eqnarray}\label{bar Z_n}
\bar \ZZ_n(g) &=&\sqrt{n} \int g\,{\rm d} (\bar \CC_n- C)\\
&=& \frac{1}{\sqrt{n}} \sum_{i=1}^n \left\{  g(\FF_{n1}(X_{i1}),\ldots, \FF_{nd}(X_{id})) - \EE [ g(F_1(X_{i1}),\ldots,F_d(X_{id}))] \right\}\nonumber
\end{eqnarray}
based on the  {\em c\`adl\`ag} version $\bar \CC_n$ of $\CC_n$.
This generalization is of particular interest because  $\bar \ZZ_n(g)$ is a multivariate rank order statistic and   common in the statistics  literature.
See Ruymgaart et al~(1972), Ruymgaart~(1974) and R\"uschendorf~(1976) for early references.
For this reason, we take $\bar \CC_n$ as our starting point.  Clearly, (\ref{bar Z_n}) reduces to (\ref{wc:bar C}) for $g(\v)=1\{\v\le \u\}$, and
Theorem~6 in Fermanian et al (2004) states that the statistic (\ref{bar Z_n}) has a normal limit distribution under suitably regular functions  $g: [0,1]^d \to\RR$.  This leads to the question ``{\em Can we characterize the class $\G$  of functions $g: [0,1]^d \to\RR$ for which (\ref{bar Z_n}) converges weakly in $\ell^\infty(\G)$?}''\\

To answer this question, we consider two complementary cases,
one that requires some smoothness of the underlying copula $C$ and one that requires smoothness of the indexing functions $g\in\G$.
Van der Vaart \& Wellner (2007) showed that if the functions $g$ are sufficiently  smooth, then existence of first-order partial derivatives of $C$ is no longer required for the weak convergence of $\bar \ZZ_n$.
This remarkable fact  was established in Corollary 5.4 of Van der Vaart \& Wellner (2007).  Theorem~2 corrects a minor  mistake in their proof (uniform equicontinuity in lieu of mere continuity of the partial derivatives is required)  and demonstrates  the weak convergence in a different way under weaker conditions on  $\G$ that require no explicit entropy conditions on $\G$.
We stress that many well-known copulas are not differentiable, for example,  the Frechet-Hoeffding copulas,
the Marshal-Olkin copula, the Cuadras-Aug\'e copula, the Raftery copula, among many others,
see the monograph by Nelsen (1999).
Moreover, many of the common goodness-of-fit tests for copulas rely
on the weak convergence of the standard copula process and thus do not apply
in   non-differentiable settings.\\

The scenario where $C$ is sufficiently smooth, while functions in $\G$ are not necessarily differentiable  has not been addressed in the literature. In case the underlying copula satisfies  Segers (2012) condition, we show that under mild conditions on $\G$ the process $\bar \ZZ_n(g)$ converges weakly. We found a surprisingly simple proof for this fact based on  the very general result, Theorem~\ref{thm:GENERAL} below. This theorem  is of interest in its own, and it is essentially the non-trivial  $d$-dimensional version of an integration by parts trick introduced in Radulovi\'c and Wegkamp (2015).\\

The paper is organized as follows. Section 2 presents a general weak convergence result of  empirical processes, indexed by functions of bounded variation, including empirical processes  based on   stationary sequences satisfying strong alpha-mixing conditions.
We stress that alpha-mixing is the least restrictive form of available
mixing assumptions in the literature. The  few results  that treat empirical processes  
indexed by functions $g\in\G$,
 all require  stringent conditions on the entropy numbers of $\G$ and on
the rate of decay for the mixing coefficients of $\X_i$, see, e.g., Andrews and Pollard (1994). The main culprit is  that alpha-mixing does not allow for sharp exponential inequalities for partial
sums. The only known cases for which sharp conditions do exist are under
more restrictive, beta-mixing dependence. The latter  allows for decoupling and
  yields exponential inequalities not unlike the i.i.d. case (Arcones \& Yu (1994), Doukhan, Massart \& Rio (1995)).
Our theory does not stop there and allows for  for short memory casual linear sequences (Doukhan
\& Surgailis, 1998). This work extends Radulovic \& Wegkamp (2015) to the multidimensional case. 
Dehling et al. (2009)   prove  weak convergence  of the standard empirical processes based a stationary
sequences that are not necessarily mixing.  Dehling et al. (2014) treat more general processes indexed by
classes of functions under  cumbersome entropy conditions on $\G$.  The advantage of the method presented in this paper is that no explicit entropy condition  on the set $\G$ is imposed, while only weak convergence of the standard empirical process is required.\\
Section 3 presents the main results for empirical copula processes indexed by functions. Smoothness  of either the copula function  $C$ or the indexing functions is required.\\
%Our main results  are presented in Section 2, followed by a  discussion and corollaries in Section 3.
The proofs of the results in Section 3 are collected in Section 4.\\
 Finally, the appendix  contains a novel integration by parts formula for multivariate, right continuous functions of bounded variation, which is perhaps of independent interest.\\

\subsection{Notations}
\label{sec:notations}

We list in this subsection the notations necessary to address the multivariate extension of the concept of bounded variation and the integration by parts formula in this paper.  We mostly follow the notations introduced in Owen~(2005,~Section~3).  For $\x\in\mathbb{R}^d$, we denote its $j$th component as $x_j$, that is, $\x=(x_1,\dots,x_d)$.  We let ${\bf 0}\in\mathbb{R}^d$ be the vector with all components equal to zero, and $\1\in\mathbb{R}^d$ be the vector with all components equal to one.  For $\a,\b\in\mathbb{R}^d$, we write $\a<\b$ or $\a\le\b$ if these inequalities hold for all $d$ components.  For $\a,\b\in\mathbb{R}^d$ with $\a\le\b$, the hypercube $[\a,\b]$ is the set $\{\x\in\mathbb{R}^d:\a\le\x\le\b\}$.  Thus $[{\bf 0},\1]=[0,1]^d$ is the closed unit hypercube, and in this paper we will work exclusively over this domain unless specified otherwise.  Also $(\a,\b) = \{\x\in\mathbb{R}^d:\a<\x<\b\}$ and $[\a,\b)$ and $(\a,\b]$ are defined similarly.

For $I,J\subset\{1,\dots,d\}$, we write $|I|$ for the cardinality of $I$, and $I-J$ for the complement of $J$ with respect to $I$.  A unary minus denotes the complement with respect to $\{1,\dots,d\}$, so that $-I= \{1,\dots,d\}-I$.  In expressions involving both the unary minus and other set operations, the unary minus has the highest precedence; for instance, $-I-J=(\{1,\dots,d\}-I)-J$.

For $I\subset\{1,\dots,d\}$, the expression $\x_I$ denotes an $|I|$-tuple of real numbers representing the components $x_j$ for $j\in I$. The domain of $\x_I$ is (typically) the hypercube $[{\bf 0}_I,\1_I]$.  Suppose that $I,J\subset\{1,\dots,d\}$ and $\x,\z\in[{\bf 0},\1]$ with $I\cap J =\emptyset$.  Then we define the concatenation symbol $:$ such that the vector $\x_I:\z_J$ represents the point $\y\in[{\bf 0}_{I\cup J}, \1_{I\cup J}]$ with $y_j = x_j$ for $j\in I$, and $y_j = z_j$ for $j\in J$. The vector $\x_I:\z_J$ is well defined for $\x_I\in[{\bf 0}_I, \1_I]$ and $\z_J\in[{\bf 0}_J,\1_J]$ when $I\cap J=\emptyset$, even if $\x_{-I}$ or $\z_{-J}$ is left unspecified.  We also use the concatenation symbol to glue together more than two sets of components.  For instance $\x_I:\y_J:\z_K\in[{\bf 0},\1]$ is well defined for $\x_I\in[{\bf 0}_I,\1_I]$, $\y_J\in[{\bf 0}_J,\1_J]$ and $\z_K\in[{\bf 0}_K,\1_K]$ when $I,J,K$ are mutually disjoint sets whose union is $\{1,\dots,d\}$.  The main purpose of the concatenation symbol is to construct the argument to a function by taking components from multiple sources.

For a function $f:[{\bf 0},\1]\rightarrow\mathbb{R}$, a set $I\subset\{1,\dots,d\}$ and a constant vector $\c_{-I}\in[{\bf 0}_{-I},\1_{-I}]$ we can define a function $g$ as a lower-dimensional projection of $f$ on $[{\bf 0}_I,\1_I]$ via $g(\x_I)=f(\x_I:\c_{-I})$.  We write $f(\x_I;\c_{-I})$ to denote such a function with the argument $\x_I$ on the left of the semicolon and the parameter $\c_{-I}$ on the right.
\\

\section{A general result}

%and $\GG_n$ having sample paths of bounded variation in a suitable sense
The main theorem in this section states that weak convergence of  a stochastic process $\int f\, {\rm d}\GG_n$, with $f\in \F$, in $\ell^\infty(\F)$ follows from the weak convergence of the stochastic process $\GG_n$ to a continuous Gaussian process $\GG$ in $\ell^\infty([0,1]^d)$,  for a large class of functions $\F$.
%provided $\F$ consists of right continuous functions of uniformly bounded variation.
This  result is of interest in its own, and it is essentially the $d$-dimensional version of an integration by parts trick introduced in Radulovi\'c and Wegkamp (2015). The proof relies on Proposition \ref{IBP!} that gives a very general integration by parts formula for $\int f\, \d \GG_n$. %(integration by parts) and the continuous mapping theorem.
The main idea is to change the integration over $\GG_n$ by integration over $f$.  For this reason we consider functions $f$ for which we can uniquely define a (signed) Borel measure on $[0,1]^d$.  The classical Lebesgue-Stieltjes integration theory on $\mathbb{R}$ is based on functions $f$ that are of bounded variation.  To consider its multivariate extension, naturally we will need to consider multivariate extensions of the concept of bounded variation.

First, we briefly recall the definition of total variation in the sense of Vitali, and refer to
Owen (2005) for a lucid presentation.
Following Owen (2005), a {\em ladder}  $\Y$ of $[0,1]$ is a (possibly empty) set of finitely many points in $(0,1)$. Each element $y\in\Y$ has a successor $y^+$, defined as the smallest element in $(y,1)\cap \Y$. (If the intersection is empty, we set $y^+=1$).
A multivariate ladder $\Y=\prod _{i=1}^d \Y_i$ of $[0,1]^d$  is based on $d$ one-dimensional ladders $\Y_i$ of $[{\bf 0}_{\{i\}},\1_{\{i\}}]$, and a successor $\y^+$ of $\y\in\Y$ is defined by taking each coordinate $y^+_j$ to be the successor of $y_j$.
The variation of a function $f$ over the multivariate ladder $\Y$ is
\begin{eqnarray*}
V_\Y(f) = \sum_{y\in\Y} | \Delta_f((\y,\y^+]) |.
\end{eqnarray*}
Here $\Delta_f((\y,\y^+])$ is the generalized volume of the hypercube $(\y,\y^+]$ based on the measure $\Delta_f$ to be introduced in (\ref{eq:f_fun_measure}).
Then the total variation of $f$ in the sense of Vitali is
\begin{eqnarray*}
V(f) := \sup_{\Y} V_\Y(f) = \sup_{\Y} \sum_{y\in\Y} | \Delta_f((\y,\y^+]) |.
\end{eqnarray*}
Here the supremum is taken over all multivariate ladders $\Y=\prod_{i=1}^d \Y_i$ of $[0,1]^d$.
It can be shown that
\[ V(f) \le  \int_{[{\bf 0},\1]} \left| \frac{\partial^d}{\partial u_1\cdots\partial u_d} f(\u) \right| \, \d\u
\] provided the mixed partial derivative of $f$ exists, see, for instance, Owen (2005, Proposition 13).\\

We will also need to consider total variation in the sense of   Krause (1903a; 1903b) and Hardy (1905).
Formally, the total variation of a function $f$ in the sense of Hardy-Krause is
\begin{align}
V_{\textup{HK}}(f) =\sum_{I\subset\{1,\ldots,d\}}  V\left( f(\cdot;{\bf 1}_{-I}) \right)  =
\sum_{ I \subset\{1,\ldots,d\} }  \int_{[{\bf 0}_I,\1_I]} \, |{\d}\Delta_f(\x_I; {\bf 1}_{-I} )|. \nonumber
\end{align}
Here $V\left( f(\cdot;{\bf 1}_{-I}) \right)$ is the Vitali variation of the function $f(\cdot; {\bf 1}_{-I})$ over $[{\bf 0}_I,\1_I]$.  (We recall from Section~\ref{sec:notations} that the function $f(\cdot; {\bf 1}_{-I}):[{\bf 0}_I,\1_I]\rightarrow\mathbb{R}$ is the lower-dimensional projection of $f$ on $[{\bf 0}_I,\1_I]$ obtained by setting $f(\x_I; {\bf 1}_{-I})=f(\x_I:{\bf 1}_{-I})$.  Also note that, in contrast to the literature, for convenience we are also including a term corresponding to $I=\emptyset$ in the sum, although this choice makes no material difference later on because the class of functions with bounded Hardy-Krause variation remains the same under our definition.)

%the symbol $\x_{I}:{\bf 1}_{-I}$ represents the point $\y\in[0,1]^d$ with $y_j= x_j$ for $j\in I$ and $y_j=1$ for $j\notin I$.
%A result of Adams and Clarkson (1934), see Proposition~12 in Owen (2005), states  that each function of bounded variation in $[0,1]^d$ in the sense of Hardy-Krause can be written as $f=f_1-f_2$ with
%$\Delta_{f_i}(\a,\b]\ge0$ for each $f_1,f_2$ and all ${\bf 0}\le \a\le \b\le{\bf1}$.
%Hence, we may take the following class of functions.\\

\ \

We will mostly consider functions satisfying the following assumption:\\

\noindent
{\bf Assumption~F.}
$f:[0,1]^d\rightarrow\mathbb{R}$ is right-continuous (to be precise, following Aistleitner~\&~Dick (2014), we say a function is right-continuous if it is coordinatewise right-continuous in each coordinate, at every point) and is of bounded variation in the sense of Hardy-Krause, that is, $V_{\textup{HK}}(f)<\infty$.\\

By Aistleitner~\&~Dick (2014, Theorem~3), if a function $f$ satisfies assumption~F, then there exists a unique signed Borel measure $\Delta_f$ on $[0,1]^d$ for which
\begin{align}
\Delta_f([{\bf 0},\x]) = f(\x),\quad \x\in[0,1]^d.
\label{eq:f_fun_measure}
\end{align}
From now on, for notational brevity, we will use the same letter $f$ to denote the function $f$ and the measure $\Delta_f$ to which it gives rise.  From (\ref{eq:f_fun_measure}), it is easy to see that, for $\a,\b\in[0,1]^d$ and $\a\le\b$, the measure $f$ assigns weight
\begin{align}
f((\a,\b]) = \int_{(\a,\b]}\, \d f = \sum_{I\subset \{1,\ldots,d\}} (-1)^{|I|} f(\a_I: \b_{-I})
\label{def:measure}
\end{align}
to the hypercube $(\a,\b]=(a_1,b_1]\times\dots\times(a_d,b_d]$.  In fact, we can conclude from Aistleitner~\&~Dick (2014) a more general result that we will also use later: if a function $f$ satisfies assumption~F, and if we let $I\subset\{1,\dots,d\}$, $\a,\b\in[0,1]^d$ with $\a_I\le\b_I$, and $\c\in\{0,1\}^d$, then to the lower-dimensional projection $f(\cdot;\c_{-I})$ there corresponds a unique signed Borel measure $f(\cdot;\c_{-I})$ on $[{\bf 0}_I,\1_I]$ that assigns weight
\begin{align}
f((\a_I,\b_I];\c_{-I}) = \int_{(\a_I,\b_I]}\, \d f(\cdot;\c_{-I}) = \sum_{I'\subset I} (-1)^{|I'|} f(\a_{I'}:\b_{I-I'}:\c_{-I})
\label{def:measure_projection}
\end{align}
to the hypercube $(\a_I,\b_I]$. The validity of this claim is verified in Appendix B. We will identify a function and its lower-dimensional projections uniquely with the measures satisfying (\ref{def:measure}) and (\ref{def:measure_projection}).

%In (\ref{def:measure}) above and throughout the paper, $-I:=\{1,\ldots,d\}-I$, and the symbol $\x_{I}:{\y}_{J}$, for $I, J\subset\{1,\dots,d\}$ with $I\cap J=\emptyset$, represents the point $\z\in[0,1]^{I\cup J}$ with $z_j= x_j$ for $j\in I$ and $z_j=y_j$ for $j\in J$.  Hence $\x_I: \y_{-I}$ in (\ref{def:measure}) represents the point $\z\in[0,1]^{d}$ with $z_j= x_j$ for $j\in I$ and $z_j=y_j$ for $j\in -I=\{1,\dots,d\}-I$.  In addition, $|I|$ is the cardinality of the index set $I$.

\begin{theorem}
\label{thm:GENERAL}
Let $\GG_n$ be a stochastic process such that its sample paths satisfy assumption~F almost surely, that $\GG_n(\u)=0$ almost surely, if $u_j=0$ for some $j\in\{1,\ldots,d\}$
%\footnote{This condition simplifies the proof and is  satisfied in our applications, but could be dropped.}
, and that $\GG_n$ converges weakly to a continuous Gaussian process $\GG$ in $\ell^\infty([0,1]^d)$.
Let $\F$ be a class of  functions $f$ satisfying assumption~F with $V_{\textup{HK}}(f)\le T<\infty$. %\[
%  \sum_{ I \subset\{1,\ldots,d\}}   \int _{({\bf 0}_I, \1_I]} \, |{\rm d}f(\x_{I}; {\bf 1}^J)| \le T.
%\]
%Here $J:=-I:=\{1,\ldots,d\}-I$, $| I|$ is the cardinality of the index set $I$ and
%the symbol $\x_{I}:{\bf 1} ^{J}$ represents the point $\y\in[0,1]^d$ with $y_j= x_j$ for $j\in I$ and $y_j=1$ for $j\ne I$.\\
Then, the empirical process
$\int f\, {\rm d}\GG_n$, indexed by $f\in\F$, converges weakly to a Gaussian process in $\ell^\infty(\F)$.
\end{theorem}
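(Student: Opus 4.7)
The plan is to reduce the theorem to the continuous mapping theorem by expressing the map $f\mapsto \int f\,\d\GG_n$ as a \emph{continuous linear functional} of the sample path $\GG_n\in\ell^\infty([0,1]^d)$, taking values in $\ell^\infty(\F)$. The key vehicle is Proposition~\ref{IBP!}: the multivariate integration by parts formula trades integration against the signed measure $\d\GG_n$ for integration against the signed measure $\d f$, together with boundary terms. Since $f$ has bounded Hardy-Krause variation, such integrals are well defined for every bounded $g$ (not only for $g=\GG_n$), so the resulting functional extends to all of $\ell^\infty([0,1]^d)$.

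Concretely, I would first apply Proposition~\ref{IBP!} to $\int_{[{\bf 0},\1]}f\,\d\GG_n$ to obtain a representation of the schematic form
\begin{equation*}
\int_{[{\bf 0},\1]} f\,\d\GG_n \;=\; \sum_{I\subset\{1,\ldots,d\}} (-1)^{|I|}\int_{[{\bf 0}_I,\1_I]} \GG_n(\x_I;\1_{-I})\,\d f(\x_I;\1_{-I})
\end{equation*}
(plus boundary contributions involving $\GG_n$ evaluated at points with some coordinate in $\{0,1\}$). The assumption that $\GG_n(\u)=0$ whenever some $u_j=0$ collapses every boundary term that would require $\GG_n$ at an axis, leaving only faces where $\x_{-I}=\1_{-I}$. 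This is exactly the $d$-dimensional analogue of the one-dimensional trick $\int f\,\d\GG_n = f(1)\GG_n(1)-\int \GG_n\,\d f$ (using $\GG_n(0)=0$), which is the $d=1$ case of Radulovi\'c and Wegkamp~(2015).

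Next, define $\Phi\colon \ell^\infty([0,1]^d)\to\ell^\infty(\F)$ by letting $\Phi(g)(f)$ be the right-hand side of the IBP identity with $\GG_n$ replaced by $g$. The map $\Phi$ is linear in $g$. Because each projected measure $\d f(\cdot;\1_{-I})$ has total variation bounded by the Vitali variation $V\bigl(f(\cdot;\1_{-I})\bigr)$, and the sum of these variations over $I$ equals $V_{\textup{HK}}(f)\le T$, one obtains a bound of the form
\begin{equation*}
\|\Phi(g)\|_{\ell^\infty(\F)} \;\le\; C_d\, T\, \|g\|_{\ell^\infty([0,1]^d)},
\end{equation*}
for some constant $C_d$ depending only on $d$. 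In particular, $\Phi$ is continuous on $\ell^\infty([0,1]^d)$.

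Finally, apply the continuous mapping theorem: since $\GG_n\weakly\GG$ in $\ell^\infty([0,1]^d)$, we have $\Phi(\GG_n)\weakly \Phi(\GG)$ in $\ell^\infty(\F)$. Almost surely $\GG_n$ satisfies assumption~F, so Proposition~\ref{IBP!} yields $\Phi(\GG_n)(f)=\int f\,\d\GG_n$ for all $f\in\F$ simultaneously, and $\Phi(\GG)$ is Gaussian as a continuous linear image of the Gaussian process $\GG$. The main obstacle is not the soft argument above but the IBP identity it rests upon: correctly formulating and proving a multivariate integration by parts formula for right-continuous functions of bounded Hardy-Krause variation, while tracking every lower-dimensional face contribution and verifying that each term is measurable and well defined. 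That technical content is precisely what Proposition~\ref{IBP!} in the Appendix is designed to supply.
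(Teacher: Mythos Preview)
Your overall strategy---integration by parts to turn $\int f\,\d\GG_n$ into an integral of $\GG_n$ against the signed measure $\d f$, then continuous mapping---is exactly the paper's route. But there is a genuine gap in the execution, and the paper's proof is designed precisely to close it.

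The integration by parts formula of Proposition~\ref{IBP!} does \emph{not} produce the integrand $\GG_n(\x_I;\1_{-I})$; it produces the left-limit $\GG_n(\x_I-;\1_{-I})$. This matters because your map $\Phi$ then involves $g\mapsto g(\cdot-)$, which is not even defined for a generic $g\in\ell^\infty([0,1]^d)$ (such $g$ need not have left limits), so $\Phi$ cannot be extended to all of $\ell^\infty([0,1]^d)$ as you claim. If instead you define $\Phi$ with $g(\x_I)$ in place of $g(\x_I-)$---which is the only way to get a globally defined, Lipschitz map on $\ell^\infty([0,1]^d)$---then $\Phi(\GG_n)(f)$ is \emph{not} equal to $\int f\,\d\GG_n$: the two differ by a term controlled by the maximal jump $\sup_\x|\GG_n(\x)-\GG_n(\x-)|$.

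The paper's proof makes exactly this two-step split. It defines the Lipschitz functional $\Gamma(\cdot,f)$ using $\GG_n(\x_I;\1_{-I})$ (no left-limit), applies the continuous mapping theorem to $\Gamma(\GG_n)$, and then separately bounds
\[
\Bigl|\,\int f\,\d\GG_n-\Gamma(\GG_n,f)\Bigr|\ \le\ T\,\sup_\x|\GG_n(\x)-\GG_n(\x-)|,
\]
uniformly in $f\in\F$. The right-hand side tends to zero in probability because $\GG_n$ converges weakly to a process with continuous sample paths (asymptotic equicontinuity); the two pieces are then combined via the bounded Lipschitz metric. Your proposal collapses these two steps into one and in doing so loses the mechanism that handles the jumps of $\GG_n$.
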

\begin{proof}
See Section 2.1.
\end{proof}

For instance, empirical processes based on alpha-mixing sequences are covered by this result. Such a result is new, as weak convergence of empirical processes for dependent variables indexed by functions are sparse in the literature and typically require rather restrictive beta-mixing conditions. Rio (2000) proved weak convergence of the process
$\sqrt{n}(\FF_n-F)(\x)$ in $\ell^\infty([0,1]^d)$ under alpha-mixing conditions only.

\begin{cor}
Let $\X_i$, $i\in\ZZ$, be a stationary sequence of random variables in $[0,1]^d$ with continuous distribution $F$ and with alpha-mixing coefficients
\[  \alpha_k:= \sup\left\{ |\PP(A\cap B)-\PP(A)\PP(B)|,\ A\in \sigma(\X_j,\ j\le i),\ B\in \sigma(\X_{k+j},\ j\ge i),\ i\in\ZZ\right\}
\] satisfying
\[ \alpha_k=O(k^{-a}) \text{  for some $a>1$ and $k\to\infty$}.
\]
Let $\FF_n$ be the empirical distribution function based on $\X_i$, $i=1,\ldots,n$ and let $\GG_n=\sqrt{n}(\FF_n-F)$ be the standard empirical process in $\ell^\infty([0,1]^d)$.
Let $\F$ be a class of  functions $f$ satisfying assumption~F with $V_{\textup{HK}}(f)\le T$.
Then $\left\{ \int f\, {\rm d}\GG_n,\ f\in\F\right\}$  converges weakly to a Gaussian process in $\ell^\infty(\F)$.\end{cor}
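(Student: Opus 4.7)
The plan is to verify the three hypotheses of Theorem~\ref{thm:GENERAL} for the choice $\GG_n = \sqrt{n}(\FF_n - F)$ and then invoke that theorem directly. Concretely, I need to check that (i) the sample paths of $\GG_n$ satisfy Assumption~F almost surely, (ii) $\GG_n(\u)=0$ almost surely whenever some coordinate of $\u$ equals zero, and (iii) $\GG_n$ converges weakly to a continuous Gaussian process in $\ell^\infty([0,1]^d)$.

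Conditions (i) and (ii) are bookkeeping. For (ii), continuity of the marginals of $F$ forces $F(\u) = 0$ when any $u_j = 0$, and since $\PP(X_{ij}=0)=0$ we also have $\FF_n(\u)=0$ on the same set almost surely; hence $\GG_n$ vanishes there almost surely. For (i), $\FF_n$ is right-continuous by the closed inequality in the indicator and $F$ is right-continuous by standard CDF regularity, so $\GG_n$ is right-continuous. Moreover, $\FF_n$ and $F$ are distribution functions of probability measures on $[0,1]^d$; their lower-dimensional projections $\FF_n(\cdot;\1_{-I})$ and $F(\cdot;\1_{-I})$ are CDFs of the corresponding $I$-marginal probability measures, each of total mass one. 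The Vitali variation of a CDF is bounded by the total mass of the associated signed measure, so $V(\FF_n(\cdot;\1_{-I}))\le 1$ and $V(F(\cdot;\1_{-I}))\le 1$ for every $I\subset\{1,\ldots,d\}$. Summing over $I$ yields $V_{\textup{HK}}(\FF_n)\le 2^d$ and $V_{\textup{HK}}(F)\le 2^d$, whence $V_{\textup{HK}}(\GG_n)\le 2^{d+1}\sqrt{n}<\infty$ almost surely; in particular $\GG_n$ satisfies Assumption~F.

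Condition (iii) is the substantive ingredient and is supplied by Rio (2000): for stationary sequences with $\alpha_k = O(k^{-a})$, $a>1$, the standard empirical process $\sqrt{n}(\FF_n-F)$ converges weakly in $\ell^\infty([0,1]^d)$ to a centered Gaussian process, and continuity of $F$ ensures continuity of the limiting sample paths. Once (i)--(iii) are in hand, Theorem~\ref{thm:GENERAL} applied to $\GG_n$ and the class $\F$ delivers weak convergence of $\int f\,\d\GG_n$ in $\ell^\infty(\F)$.

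The main obstacle, such as it is, lies outside this proof: it is precisely Rio's weak convergence theorem under bare $\alpha$-mixing, which is delicate because $\alpha$-mixing does not allow sharp exponential inequalities for partial sums. Within the present proof, the only point deserving care is the deterministic bound on the Hardy-Krause variation of $\FF_n$ and $F$, which relies on identifying each lower-dimensional projection with a genuine marginal CDF in the concatenation notation of Section~\ref{sec:notations}; everything else reduces to citing Theorem~\ref{thm:GENERAL}.
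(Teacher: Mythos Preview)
Your proof is correct and follows the same route as the paper: verify the hypotheses of Theorem~\ref{thm:GENERAL} for $\GG_n=\sqrt{n}(\FF_n-F)$, invoking Rio (2000, Theorem~7.3) for the weak convergence of $\GG_n$ to a continuous Gaussian limit, and then apply Theorem~\ref{thm:GENERAL}. The paper's proof simply states this in two sentences without spelling out conditions (i) and (ii), which you verify explicitly and correctly.
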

\begin{proof}
Theorem~7.3 in Rio (2000) establishes the weak convergence of the process $\GG_n$ in $\ell^\infty([0,1]^d)$. The corollary follows immediately from Theorem~\ref{thm:GENERAL}.
 \end{proof}

\bigskip

\subsection{Proof of Theorem~1}

The proof of Theorem~1 % is surprisingly simple. It
relies on %classical measure theory (integration by parts) and the continuous mapping theorem.
%We use
the following integration by parts formula.

\begin{proposition}\label{IBP!}
Let $\GG_n(\u)$, $\u\in[0,1]^d$, be a stochastic process such that its sample paths satisfy assumption~F, and that
$\GG_n(\u)=0$ if $u_j=0$ for some $j$. For any $f$ satisfying assumption~F, we have
\begin{eqnarray}\label{IBPformula}
\int_{({\bf 0},\1]}f(\x)\,\mathrm{d}\mathbb{G}_{n}(\x) &=& \sum_{I\subset \{1,\ldots
,d\}}(-1)^{|I|}\int_{({\bf 0}_I, \1_I]}\mathbb{G}_{n}(\x_{I}-;{\bf 1}_{-I})\,\mathrm{d}%
f(\x_I;\1_{-I}).
\end{eqnarray}
\end{proposition}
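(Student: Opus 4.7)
My plan is to exploit the measure-theoretic representation of functions of bounded Hardy-Krause variation (Aistleitner \& Dick, 2014) and apply Fubini's theorem to a double integral built from $\d f$ and $\d\mathbb{G}_n$. By Assumption~F both $f$ and each sample path of $\mathbb{G}_n$ give rise to uniquely determined finite signed Borel measures on $[0,1]^d$; in particular $f(\y)=\int_{[\mathbf{0},\mathbf{1}]}1\{\x\le\y\}\,\d f(\x)$. Substituting this representation for $f$ into the left-hand side of (\ref{IBPformula}) and applying Fubini to the Jordan decompositions of $\d f$ and $\d\mathbb{G}_n$ (the integrand $1\{\x\le\y\}$ is bounded, so integrability is automatic) yields
\begin{align*}
\int_{(\mathbf{0},\mathbf{1}]} f(\y)\,\d\mathbb{G}_n(\y)
 = \int_{[\mathbf{0},\mathbf{1}]} \mathbb{G}_n\big([\x,\mathbf{1}]\big)\,\d f(\x),
\end{align*}
where $\mathbb{G}_n([\x,\mathbf{1}])$ denotes the signed $\d\mathbb{G}_n$-mass of the closed box $\{\y:\y\ge\x\}$.

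Next, I would evaluate $\mathbb{G}_n([\x,\mathbf{1}])$ by inclusion-exclusion applied to the complement $\bigcup_{j=1}^d\{\y:y_j<x_j\}$. For each $J\subset\{1,\ldots,d\}$ the set $\{\y\in[\mathbf{0},\mathbf{1}]:y_j<x_j,\ j\in J\}$ is the increasing union of the boxes $[\mathbf{0},(\x_J-\eps):\mathbf{1}_{-J}]$ as $\eps\downarrow 0$. Continuity of a finite signed measure from below together with the existence of coordinatewise left limits (a consequence of Assumption~F) identifies its $\d\mathbb{G}_n$-mass with the left limit $\mathbb{G}_n(\x_J-;\mathbf{1}_{-J})$; the boundary condition $\mathbb{G}_n(\u)=0$ whenever some $u_j=0$ forces this value to vanish whenever some $x_j=0$ with $j\in J$. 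Combining these yields
\begin{align*}
\mathbb{G}_n([\x,\mathbf{1}]) = \sum_{J\subset\{1,\ldots,d\}}(-1)^{|J|}\mathbb{G}_n(\x_J-;\mathbf{1}_{-J}).
\end{align*}

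To finish, I would substitute this expression back and observe that, for each $J$, the integrand $\mathbb{G}_n(\x_J-;\mathbf{1}_{-J})$ depends on $\x$ only through $\x_J$, so integration against $\d f$ reduces to integration against the $J$-marginal of $\d f$. This marginal has distribution function $[\mathbf{0}_J,\x_J]\mapsto\d f([\mathbf{0},\x_J:\mathbf{1}_{-J}])=f(\x_J;\mathbf{1}_{-J})$, which by the uniqueness half of the Aistleitner \& Dick correspondence (already used in (\ref{def:measure_projection})) is the signed measure $\d f(\cdot;\mathbf{1}_{-J})$. Because the integrand vanishes on the lower boundary, restricting to $(\mathbf{0}_J,\mathbf{1}_J]$ leaves the integral unchanged, and the signs $(-1)^{|J|}$ reassemble to give (\ref{IBPformula}), with the $J=\emptyset$ term reducing to the scalar $\mathbb{G}_n(\mathbf{1})f(\mathbf{1})$.

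The main obstacle is the rigorous signed-measure bookkeeping: (i) justifying Fubini on indefinite-sign measures via Jordan decompositions, (ii) identifying the $J$-marginal of $\d f$ with the projection measure $\d f(\cdot;\mathbf{1}_{-J})$, and (iii) interpreting $\mathbb{G}_n(\x_J-;\mathbf{1}_{-J})$ as the genuine measure-theoretic limit of boxes. Each of these hinges on the uniqueness statement in Aistleitner \& Dick (2014), and the boundary assumption on $\mathbb{G}_n$ is what makes inclusion-exclusion collapse to left limits rather than more complicated boundary terms.
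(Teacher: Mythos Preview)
Your argument is correct and reaches the same identity, but it is organized differently from the paper. The paper first proves a fully general integration-by-parts formula (Theorem~\ref{IBP-main} in the appendix), valid for arbitrary $f,g$ satisfying Assumption~F on $[\a,\b]$, which produces $3^d$ terms indexed by partitions $I_1+I_2+I_3=\{1,\dots,d\}$; Proposition~\ref{IBP!} then follows in one line by observing that the boundary hypothesis $\mathbb{G}_n(\u)=0$ on $\{u_j=0\}$ kills every term with $I_2\neq\emptyset$. The paper's route to Theorem~\ref{IBP-main} uses Owen's upper-anchored representation $f(\x)=\sum_I(-1)^{|I|}f((\x_I,\b_I];\b_{-I})$, Fubini, and then a nontrivial combinatorial reindexing (the $u_1,\dots,u_5$ argument) to collapse duplicates.

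You instead use the lower-anchored representation $f(\y)=\Delta_f([\mathbf{0},\y])$, apply Fubini once, and land directly on $\int\mathbb{G}_n([\x,\mathbf{1}])\,\d f(\x)$; inclusion--exclusion on the closed box $[\x,\mathbf{1}]$ together with marginalization of $\d f$ then gives exactly the $2^d$ terms of (\ref{IBPformula}) without any further simplification. This is shorter and more transparent for the special case at hand, precisely because you build in the boundary condition from the outset (it is what lets you replace $\{\y\in(\mathbf{0},\mathbf{1}]:\y\ge\x\}$ by $[\x,\mathbf{1}]$ in the inner integral---a point you pass over a bit quickly but which is justified since the hypothesis forces $\d\mathbb{G}_n$ to put zero mass on each face $\{u_j=0\}$). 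The trade-off is that the paper's approach yields the general formula (\ref{eq:IBP}) as a standalone result of independent interest, whereas yours is tailored to the zero-boundary situation. Your points (i)--(iii) about signed-measure bookkeeping are exactly the right technical concerns; each is handled by the Aistleitner--Dick correspondence and Appendix~B of the paper, so no gap remains.
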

\begin{proof}
The result follows from the general
formula (\ref{IBP-main}) in Theorem~\ref{IBP-main} in the appendix. Notice that in Theorem~\ref{IBP-main} if $I_2\ne\emptyset$, then each term
$\GG_n(\x_{I_1}-; {\bf 0}_{I_2}: {\bf 1}_{I_3})$ in the integrand of (\ref{IBP-main}) equals zero under the assumption of Proposition~\ref{IBP!}.
\end{proof}

%\begin{proof}
For any $f\in \mathcal{F}$, we define
\begin{eqnarray*}
\bar{\mathbb{G}}_{n}(f) &=&\int f\,d\mathbb{G}_{n} \\
\widetilde{\mathbb{G}}_{n}(f) &=&\Gamma (\mathbb{G}_{n},f)
\end{eqnarray*} based on the functional
\begin{eqnarray*}
\Gamma(\GG_n,f) &:=& \sum_{I\subset \{1,\ldots,d\}}(-1)^{|I|}\int_{({\bf 0}_I, \1_I]}\mathbb{G}_{n}(\x_{I};{\bf 1}_{-I})\,\mathrm{d}%
f(\x_{I};{\bf 1}_{-I}).
\end{eqnarray*}
\\

First, for each $f\in\F$, the functional $\Gamma(\cdot,f):\ell^\infty(\RR^d)\to\RR$ is linear and Lipschitz as
\begin{eqnarray*}
| \Gamma (X,f)-\Gamma(Y,f) | &\le & \sum_{I\subset \{1,\ldots
,d\}} \int_{({\bf 0}_I, \1_I]}   | \,\mathrm{d}f(\x_{I};{\bf 1}_{-I})| \cdot  \| X-Y\|_\infty\\
&\le & T \| X-Y\|_\infty.
\end{eqnarray*}

For any fixed $f\in \mathcal{F}$,   by the
continuous mapping theorem, see, e.g., Theorem~1.3.6 in Van der Vaart \& Wellner (1996), and the weak convergence
of $\mathbb{G}_{n}\to\GG$, we have
\[
\widetilde{\mathbb{G}}_{n}(f)=\Gamma (\mathbb{G}_{n},f){\rightarrow }%
\Gamma (\mathbb{G},f):=\widetilde{\mathbb{G}}(f)
\]%
as $n\rightarrow \infty $. This result is pointwise in $f$; i.e., it provides {\em fidi}-convergence of $\widetilde\GG_n$. Linearity of $\Gamma(\cdot,f)$ yields that the limit $\widetilde \GG(f)$ is normal.\\

Next, we define the map $\Gamma:\ell^\infty(\RR^d)\to\ell^\infty(\F)$ as $\Gamma(X)= \Gamma(X,f), f\in\F$. (For notational brevity we use the same letter $\Gamma$ to denote the functional introduced earlier and the map here, though there should be no confusion because they take different arguments.) Then
\begin{eqnarray*}
 \|\Gamma(X)-\Gamma(Y)\| &=& \sup_{f\in\G}   |\Gamma(X,f)-\Gamma(Y,f)| \\
 &\le& T \| X-Y\|_\infty
 \end{eqnarray*}
The continuous mapping theorem guarantees that the limit $\widetilde\GG:= \Gamma(\GG)$ of $\Gamma(\GG_n)$ is tight in $\ell^\infty(\F)$.

Finally, we have for the bounded Lipschitz distance
\begin{eqnarray*}
d_{BL}(\bar{\mathbb{G}}_{n},\widetilde{\mathbb{G}}) &= & \sup_{H} \left|\EE [ H( \bar{\mathbb{G}}_{n} ) ] - \EE[ H( \widetilde{\mathbb{G}} )] \right|
\end{eqnarray*}
with the supremum taken over $H:\ell^\infty(\F)\to\RR$ with $\sup_{x\in \ell^\infty(\F)} | H(x)  |   \le 1$ and $\| H(x)-H(y)\|\le \| x-y\|$ for all $x,y\in\ell^\infty(\G)$,
the following bound
\begin{eqnarray*}
d_{BL}(\bar{\mathbb{G}}_{n},\widetilde{\mathbb{G}}) &\leq &d_{BL}(\bar{\mathbb{G}}%
_{n},\widetilde{\mathbb{G}}_{n})+d_{BL}(\widetilde{\mathbb{G}}_{n},\widetilde{\mathbb{G}}%
) \\
&\leq &T\mathbb{E}[\sup_{\x}|\mathbb{G}_{n}(\x)-\mathbb{G}%
_{n}(\x^{-})|]+Td_{BL}(\mathbb{G}_{n},\mathbb{G}).
\end{eqnarray*}%
The first bound follows since, first by applying Proposition~\ref{IBP!} to the term $\bar{\mathbb{G}}_{n}(f)$ and then by assumption on the uniform boundedness of $V_{\textup{HK}}(f)$, we have
\begin{eqnarray*}
|\bar{\mathbb{G}}_{n}(f)-\widetilde{\mathbb{G}}_{n}(f)| &\leq &\sup_{\x}|\mathbb{G
}_{n}(\x)-\mathbb{G}_{n}(\x^{-})|\sum_{ I \subset \{1,\ldots
,d\}}\int_{({\bf 0}_I, \1_I]}\,|\mathrm{d}f({\bf x}_{I}:1_{-I})| \\
&\leq &T\sup_{\x}|\mathbb{G}_{n}(\x)-\mathbb{G}_{n}(\x^{-})|,
\end{eqnarray*}%
while the bound for the second term is a consequence of
Lipschitz property of the map $\Gamma$ with Lipschitz constant $T$:
\begin{eqnarray*}
d_{BL}(\widetilde\GG_n,\widetilde \GG) &=& \sup_{H} \left| \EE[ H(\widetilde \GG_n)]- \EE [ H(\widetilde \GG)] \right| \\
&=& \sup_{H} \left| \EE[ H \circ \Gamma( \GG_n)]- \EE [ H \circ \Gamma( \widetilde \GG)] \right| \\
&\le& T d_{BL}( \GG_n , \GG).
\end{eqnarray*}
We conclude that $d_{BL}(\bar{\mathbb{G}}_{n},\widetilde{\mathbb{G}})\rightarrow 0$
as $n\rightarrow \infty $. Since the limit $\widetilde{\mathbb{G}}:=\Gamma(\GG)$ is tight,
the desired weak convergence of $\bar\GG_n$ and $\widetilde{\mathbb{G}}_{n}$ follows.
\qed%\end{proof}
 \bigskip

\section{Empirical copula processes indexed by functions}

\subsection{Smooth copula functions}
Our first result requires that the empirical process $\sqrt{n}(\bar\CC_n-C)(\u)$ converges weakly to a Gaussian limit  and
consider the class $\G$  of  right-continuous functions $g:[0,1]^d\to\RR$ with
\begin{eqnarray}\label{TV}
 V_{\textup{HK}}(g) :=  \sum_{ I \subset\{1,\ldots,d\}}   \int _{[{\bf 0}_I, \1_I]} \, |{\rm d}g(\x_I; {\bf 1}_{-I})| \le T
\end{eqnarray}
In words, we require that the Vitali variation of the functions $g$ and their marginals $g(\x_I;{\bf 1}_{-I})$ are uniformly bounded.

\begin{theorem}\label{thm:ecp1}
Assume that $\sqrt{n}(\bar\CC_n-C)$ converges weakly to a continuous Gaussian process in $[0,1]^d$.
Provided the functions $g\in\G$ satisfy assumption~F with  $V_{\textup{HK}}(g)\le T<\infty$, the empirical process $\bar\ZZ_n$, defined in (\ref{bar Z_n}), converges weakly to a Gaussian limit in $\ell^\infty(\G)$.
\end{theorem}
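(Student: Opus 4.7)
My plan is to derive Theorem~\ref{thm:ecp1} as an immediate corollary of Theorem~\ref{thm:GENERAL}, applied with the choices $\GG_n := \sqrt{n}(\bar\CC_n - C)$ and $\F := \G$. Under this identification one has $\bar\ZZ_n(g) = \int g\,\d\GG_n$ for every $g\in\G$, so the weak convergence asserted here is precisely the conclusion of Theorem~\ref{thm:GENERAL}, and the proof reduces to checking that all four hypotheses of that theorem are met for this choice of $\GG_n$ and $\F$.

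Two of the hypotheses are free: weak convergence of $\GG_n$ to a continuous Gaussian limit in $\ell^\infty([0,1]^d)$ is the standing assumption of the theorem, and the uniform bound $V_{\textup{HK}}(g)\le T$ for $g\in\G$ is part of the statement. For the boundary condition $\GG_n(\u)=0$ whenever some $u_j=0$, I would note that $\FF_{nj}(X_{ij})\ge 1/n>0$ deterministically, so each indicator $\1\{\FF_{nj}(X_{ij})\le 0\}$ in the definition of $\bar\CC_n$ vanishes on that boundary face; and $C(\u)=0$ there because $C$ is a copula. Hence $\GG_n$ vanishes on $\{u_j=0\}$ for every $j$ almost surely.

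The only verification requiring a line of thought is Assumption~F for the sample paths of $\GG_n$. Along every path, $\bar\CC_n$ is the distribution function of the empirical probability measure supported on the pseudo-observations $(\FF_{n1}(X_{i1}),\dots,\FF_{nd}(X_{id}))\in[0,1]^d$; it is therefore coordinatewise right-continuous, and its Hardy-Krause variation is bounded by a constant depending only on $d$, since every lower-dimensional marginal of $\bar\CC_n$ is again a distribution function of total mass at most one. The copula $C$ is continuous and, by the same reasoning applied to the uniform measure it induces, has $V_{\textup{HK}}(C)$ finite as well. Subadditivity and positive homogeneity of $V_{\textup{HK}}$ then give $V_{\textup{HK}}(\GG_n)<\infty$ almost surely, and right-continuity is preserved under subtraction and scaling. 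With all hypotheses checked, Theorem~\ref{thm:GENERAL} applies verbatim and yields the claim. I do not anticipate any genuine obstacle here: the integration-by-parts formula and the bounded-Lipschitz estimate have already been packaged into Theorem~\ref{thm:GENERAL}, and the entire content of Theorem~\ref{thm:ecp1} amounts to verifying that the empirical c\`adl\`ag copula $\bar\CC_n$ slots into that framework.
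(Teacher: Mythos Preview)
Your proposal is correct and follows exactly the paper's approach: the paper's own proof consists of the single line ``This follows immediately from Theorem~\ref{thm:GENERAL},'' and you have simply spelled out the routine verifications of the hypotheses (boundary vanishing, right-continuity, and finite Hardy--Krause variation of $\sqrt{n}(\bar\CC_n-C)$) that the paper leaves implicit.
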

\begin{proof}
This follows immediately from Theorem~1.
\end{proof}

The proof of Theorem~1 reveals that limiting process can be characterized as  
\begin{eqnarray*}
\Gamma(\GG,f) &:=& \sum_{I\subset \{1,\ldots,d\}}(-1)^{|I|}\int_{({\bf 0}_I, \1_I]}\mathbb{G}(\x_{I};{\bf 1}_{-I})\,\mathrm{d}%
f(\x_{I};{\bf 1}_{-I})
\end{eqnarray*}
for $f\in\G$, based on
 the limit $\GG$ of $\sqrt{n}(\bar{\CC}_n-C)$.  \\

The class of functions $\G$ considered in Theorem~\ref{thm:ecp1} is an obvious generalization of the class of indicator functions $1\{ \cdot\le \x\}$ of the half spaces $\prod_{i=1}^d (0,x_i]$, $\x=(x_,\ldots,x_d)\in[0,1]^d$. Theorem~\ref{thm:ecp1} requires no differentiability of $g\in\G$, only right-continuity and bounded variation.\\

Theorem~\ref{thm:ecp1}  requires that the empirical process $\sqrt{n}(\CC_n-C)$ converges weakly in $[0,1]^d$. This is shown in increasing generality by Stute (1984), Fermanian et al. (2004) and Segers (2012). It also allows for dependent observations $\X_i$, not just i.i.d. observations, as the theorem requires weak convergence only of the process $\sqrt{n}(\CC_n-C)$.
B\"ucher and Vogulshev (2013), in turn, show that the latter  is implied by weak convergence of the process
$\sqrt{n}(\FF_n-C)$ for $\FF_n$ the empirical distribution function based on pseudo-observations ${\bf U}_1,\ldots,{\bf U}_n$ with
${ \bf U}_{i}= (F_1(X_{i1}),\ldots,F_d(X_{id})$.

 \begin{cor}\label{aaaa}
Assume that $\sqrt{n}(\FF_n-C)$ converges weakly to a Gaussian limit $B_C$ in $\ell^\infty([0,1]^d)$ with
$B_C$ continuous and $B_C({\bf 1})=0$ and $B_C(\x)=0$ if $x_j=0$ for some $j$. Moreover, assume that $\dot{C}_k$ exists and is continuous on $\{\u\in[0,1]^d:\ u_k\in(0,1)\}$ for $k=1,\ldots,d$.
Provided the functions $g\in\G$ satisfy assumption~F with  $V_{\textup{HK}}(g)\le T<\infty$, the empirical process $\bar\ZZ_n$, defined in (\ref{bar Z_n}), converges weakly to a Gaussian limit in $\ell^\infty(\G)$.
\end{cor}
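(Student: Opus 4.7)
The plan is to reduce the corollary to Theorem~\ref{thm:ecp1} by upgrading the assumed weak convergence of $\sqrt{n}(\FF_n - C)$ to that of $\sqrt{n}(\bar\CC_n-C)$, and then checking the structural conditions on the paths of $\sqrt{n}(\bar\CC_n-C)$ that are implicitly required by Theorem~\ref{thm:GENERAL}.

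First I would invoke B\"ucher and Volgushev (2013) (or directly Segers (2012) in the i.i.d.\ case), which tells us that weak convergence $\sqrt{n}(\FF_n-C)\weakly B_C$ together with existence and continuity of $\dot C_k$ on $\{\u:u_k\in(0,1)\}$ for each $k=1,\ldots,d$ delivers
\[
\sqrt{n}(\CC_n - C) \weakly \GG_C \quad \text{in } \ell^\infty([0,1]^d),
\]
where
\[
\GG_C(\u) = B_C(\u) - \sum_{k=1}^d \dot C_k(\u)\, B_C(u_1,\ldots,u_{k-1},1,u_{k+1},\ldots,u_d),
\]
with the standard convention $\dot C_k\equiv 0$ on $\{u_k\in\{0,1\}\}$. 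The assumptions that $B_C$ is continuous, $B_C(\mathbf 1)=0$, and $B_C(\x)=0$ whenever some $x_j=0$ ensure that each summand, and hence $\GG_C$ itself, is continuous on the closed hypercube.

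Next I would transfer this to $\bar\CC_n$ using the deterministic bound~(4) from the Introduction, which gives $\|\sqrt{n}(\bar\CC_n-C)-\sqrt{n}(\CC_n-C)\|_\infty\le 2/\sqrt{n}$; asymptotic equivalence then yields $\sqrt{n}(\bar\CC_n-C)\weakly \GG_C$ in $\ell^\infty([0,1]^d)$ to the same continuous Gaussian limit. Finally I would verify the path regularity required by Theorem~\ref{thm:ecp1}: since both $\bar\CC_n$ and $C$ are distribution functions on $[0,1]^d$, their sample paths are right-continuous, have bounded Hardy-Krause variation (their Vitali variation is the total mass $1$, and the projection terms are equally well behaved), and vanish whenever any coordinate equals zero; the same therefore holds for $\sqrt{n}(\bar\CC_n-C)$. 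Applying Theorem~\ref{thm:ecp1} with the given bound $V_{\textup{HK}}(g)\le T$ then closes the argument.

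The only delicate point I anticipate is ensuring that $\GG_C$ is continuous on the full closed hypercube rather than merely on $(0,1)^d$; this is where the boundary assumptions on $B_C$ must be used in conjunction with the convention $\dot C_k\equiv 0$ on $\{u_k\in\{0,1\}\}$, because $\dot C_k$ itself need not extend continuously to the boundary. Beyond that single verification, the proof is a straightforward chaining of known results with Theorem~\ref{thm:ecp1}.
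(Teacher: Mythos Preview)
Your approach is essentially the paper's own: invoke B\"ucher and Volgushev (2013, Corollary~2.5) to upgrade the assumed weak convergence of $\sqrt{n}(\FF_n-C)$ to weak convergence of $\sqrt{n}(\CC_n-C)$ (hence of $\sqrt{n}(\bar\CC_n-C)$) to a continuous Gaussian limit, and then apply Theorem~\ref{thm:ecp1}. The extra details you supply (the transfer via the $2/\sqrt{n}$ bound, the path regularity of $\bar\CC_n-C$) are exactly the background facts the paper leaves implicit.

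One slip to fix: in your displayed formula for $\GG_C$ the marginal term should be $B_C(1,\ldots,1,u_k,1,\ldots,1)$, not $B_C(u_1,\ldots,u_{k-1},1,u_{k+1},\ldots,u_d)$; only the $k$th coordinate is free. With the correct expression, your ``delicate point'' resolves cleanly, since this marginal vanishes at $u_k\in\{0,1\}$ by the boundary hypotheses on $B_C$, while $\dot C_k$ stays bounded by $1$.
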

\begin{proof}
Corollary 2.5 in B\"ucher and Vogulshev (2013) prove
weak convergence of  $\sqrt{n}(\FF_n-C)$ in $\ell^\infty([0,1]^d) $  and the conditions on its limit imply
weak convergence of the empirical copula process
$\sqrt{n}(\CC_n-C)$ to a continuous Gaussian process in $\ell^\infty([0,1]^d)$.
The conclusion follows immediately from Theorem~\ref{thm:ecp1}.
\end{proof}

For instance, if the empirical distribution $\FF_n$ is based on  a stationary  sequence of  $\X_i$  satisfying the (alpha-mixing) conditions in Corollary 2, the conclusion of  Corollary \ref{aaaa} holds.
\\
The  limiting process of $\sqrt{n}\int g\, d(\bar \CC_n-C)$ can be characterized in the i.i.d. case as  
\begin{eqnarray*}
\sum_{I\subset \{1,\ldots,d\}}(-1)^{|I|}\int_{({\bf 0}_I, \1_I]} \left\{ \alpha(\u_I;{\bf 1}_{-I}) -\sum_{i\in I}  \dot{C}_i(\u_I;{\bf 1}_{-I}) \alpha_i(\u_I;{\bf 1}_{-I}) \right\}   \,\mathrm{d}%
f(\u_{I};{\bf 1}_{-I})
\end{eqnarray*}
for $f\in\G$.   
Here  $\alpha -\sum_{j=1}^d \dot{C}_j \alpha_j$  is  the limiting process of $\sqrt{n}(\bar{\CC}_n-C)$ and $\alpha$ is a $C$-Brownian bridge in $\ell^\infty([0,1]^d)$.

\bigskip

\subsection{Smooth index functions}
Our next result requires that $\G$ is a $C$-Donsker class of differentiable functions $g:[0,1]^d\to\RR$.
For any $g\in\G$,  we write  $\dot{g}_k$ be the partial derivative of $g$ with respect to the $k$th coordinate, i.e., $\dot{g}_k(\u)=\partial_k g(\u)= \partial g(\u)/\partial u_k$, $\u=(u_1,\ldots,u_d)$.
We assume that the classes of partial derivatives
\begin{eqnarray}\label{eq:calF_k}
 \dot{\calG}_k = \left\{
\dot{g}_k = \partial_k g,\ g\in\calG \right\}
\end{eqnarray}
are uniformly equicontinuous.
Interestingly, if the functions $g$ are sufficiently  smooth, then existence of first-order partial derivatives of $C$ is no longer required for the weak convergence of $\bar \ZZ_n$.\\
%This remarkable fact is stated in Theorem~1.

\begin{theorem}\label{theorem:ecp2}
Assume that
\begin{itemize}
\item[-] $F$ has continuous marginals, and copula function $C$;
\item[-]  $\G$ is a uniformly bounded $C$-Donsker class;
\item[-]
  the first-order partial derivatives $\dot{g}_k$ of  $g\in\G$ exist and the   classes $\dot{\G}_k$, $k=1,\ldots,d$, are uniformly equicontinuous and   uniformly bounded.
 \end{itemize}
Then, the empirical copula process $\bar\ZZ_n$, defined in (\ref{bar Z_n}),
converges weakly to a Gaussian process in $\ell^\infty(\G)$, as $n\to\infty$.
\end{theorem}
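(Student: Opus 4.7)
The plan is to exploit the $C$-Donsker property of $\G$ directly and to control the discrepancy between $g$ at the pseudo-observations and $g$ at the underlying (unobservable) uniform vectors via a first-order Taylor expansion, with the uniform equicontinuity of $\dot\G_k$ providing exactly the uniformity over $g\in\G$ that the Donsker assumption alone cannot supply. Writing $\u_i := (F_1(X_{i1}),\ldots,F_d(X_{id}))$ and $\widehat{\u}_i := (\FF_{n1}(X_{i1}),\ldots,\FF_{nd}(X_{id}))$, I would first decompose
\[
\bar\ZZ_n(g) = T_{1,n}(g) + T_{2,n}(g),
\]
with
\[
T_{1,n}(g) = \frac{1}{\sqrt{n}}\sum_{i=1}^n\bigl\{g(\u_i)-\EE g(\u_i)\bigr\}, \qquad T_{2,n}(g) = \frac{1}{\sqrt{n}}\sum_{i=1}^n\bigl\{g(\widehat{\u}_i)-g(\u_i)\bigr\}.
\]
Since the $\u_i$ are i.i.d.\ with distribution $C$, the assumed $C$-Donsker property of $\G$ immediately delivers weak convergence of $T_{1,n}$ in $\ell^\infty(\G)$ to a tight Gaussian element.

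For $T_{2,n}$, I would exploit the identity $\widehat u_{ik}-u_{ik}=\widetilde{\FF}_{nk}(u_{ik})-u_{ik}=n^{-1/2}\alpha_{n,k}(u_{ik})$, where $\widetilde{\FF}_{nk}$ is the empirical c.d.f.\ of $u_{1k},\ldots,u_{nk}$ and $\alpha_{n,k}=\sqrt{n}(\widetilde{\FF}_{nk}-\mathrm{id})$ is the $k$th marginal uniform empirical process. A first-order Taylor expansion yields
\[
g(\widehat{\u}_i)-g(\u_i) = \sum_{k=1}^d \dot g_k(\u_i)(\widehat u_{ik}-u_{ik}) + R_{n,i}(g),
\]
with remainder bounded by $\omega_g\bigl(\sup_i\|\widehat{\u}_i-\u_i\|_\infty\bigr)\sum_{k=1}^d|\widehat u_{ik}-u_{ik}|$, where $\omega_g$ denotes a joint modulus of continuity of the $\dot g_k$'s. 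The classical Glivenko--Cantelli theorem gives $\sup_i\|\widehat{\u}_i-\u_i\|_\infty=o_P(1)$, and the uniform equicontinuity of $\dot\G_k$ then promotes this to $\sup_{g\in\G}\omega_g(\sup_i\|\widehat{\u}_i-\u_i\|_\infty)=o_P(1)$. Since $n^{-1/2}\sum_i|\widehat u_{ik}-u_{ik}|=n^{-1}\sum_i|\alpha_{n,k}(u_{ik})|\le\|\alpha_{n,k}\|_\infty=O_P(1)$ by the one-dimensional Donsker theorem, the aggregated remainder $n^{-1/2}\sum_i R_{n,i}(g)$ is $o_P(1)$ uniformly in $g\in\G$.

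The leading linear part of $T_{2,n}$ rewrites as
\[
\sum_{k=1}^d\int_{[0,1]^d}\dot g_k(\u)\,\alpha_{n,k}(u_k)\,\d\widetilde{\FF}_n(\u),
\]
with $\widetilde{\FF}_n$ the joint empirical distribution of $\u_1,\ldots,\u_n$. The uniform convergence $\widetilde{\FF}_n\to C$, the weak convergence of $\alpha_{n,k}$ to the $k$th marginal of a $C$-Brownian bridge $\alpha$, and the uniform boundedness plus equicontinuity of $\dot\G_k$ together justify, via a continuous-mapping argument on the map $(\dot g_k,\alpha_{n,k},\widetilde{\FF}_n)\mapsto\int\dot g_k\alpha_{n,k}\,\d\widetilde{\FF}_n$, the uniform-in-$g$ asymptotic equivalence of this term to the Gaussian functional $\sum_{k=1}^d\int\dot g_k(\u)\alpha(u_k;\1_{-\{k\}})\,\d C(\u)$. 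Combining with the limit of $T_{1,n}$ and using joint convergence of the underlying uniform empirical process on $[0,1]^d$ (which drives both pieces simultaneously), I obtain weak convergence of $\bar\ZZ_n$ in $\ell^\infty(\G)$ to a tight Gaussian limit.

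The main obstacle is the uniform-in-$g$ control of both the Taylor remainder and the $\widetilde{\FF}_n\to C$ substitution inside the integral of the previous step. The $C$-Donsker property of $\G$ by itself is insufficient; the uniform equicontinuity of the derivative classes $\dot\G_1,\ldots,\dot\G_d$ is essential to upgrade pointwise $o_P(1)$ bounds to uniform ones, and is, as foreshadowed in the introduction, precisely the strengthening of the hypothesis of Corollary~5.4 of Van der Vaart \& Wellner (2007) that the theorem requires.
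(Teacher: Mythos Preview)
Your decomposition $\bar\ZZ_n=T_{1,n}+T_{2,n}$, followed by a Taylor expansion of $T_{2,n}$ into a linear part plus a remainder, is exactly the paper's strategy, and your treatment of the remainder reproduces Proposition~\ref{prop1}. The difference is in how the linear part $\sum_k\int\dot g_k\,\alpha_{n,k}\,\d\widetilde{\FF}_n$ is handled and combined with $T_{1,n}$. The paper observes that $\int\dot g_k(\u)\,\alpha_{n,k}(u_k)\,\d C(\u)=\int T_k(g)\,\d\UU_n$ for the explicit function $T_k(g)(\x)=\int\dot g_k(\u)\,\1\{x_k\le u_k\}\,\d C(\u)$, so that $T_{1,n}$ plus the $C$-version of the linear part is itself the single empirical process $\int(g+\sum_kT_k(g))\,\d\UU_n$ indexed by a $C$-Donsker class (Lemma~\ref{convergentie}); this packages joint convergence for free. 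The residual $\sum_k\int\dot g_k\,\alpha_{n,k}\,\d(\widetilde{\FF}_n-C)$ --- precisely your ``$\widetilde{\FF}_n\to C$ substitution'' --- is then shown to be $o_P(1)$ uniformly in $g$ by a Glivenko--Cantelli argument (Proposition~\ref{prop2}): on the event $\{\|\alpha_{n,k}\|_\infty\le M\}$ the integrands lie in the random class $\calH_{k,n}(M)=\{\sqrt n(D-I)f_k:f_k\in\dot\G_k,\ D\text{ a c.d.f.\ on }[0,1]\text{ with }\sqrt n\|D-I\|_\infty\le M\}$, whose $L_1$-entropy is $O(\sqrt n)=o(n)$.

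Your alternative --- a continuous-mapping argument for this substitution --- can be made rigorous, but is not as automatic as you suggest, and this is where the real work of the proof hides. The map $(\beta,\mu)\mapsto\bigl[g\mapsto\int\dot g_k(\u)\,\beta(u_k)\,\d\mu(\u)\bigr]$ is \emph{not} continuous in the weak topology on $\mu$ at generic $\beta$, because $\alpha_{n,k}$ has jumps; continuity holds only at the limit point $(\alpha_k,C)$, where the almost-sure continuity of the Brownian-bridge marginal $\alpha_k$ makes the family $\{\dot g_k(\cdot)\,\alpha_k(\cdot_k):g\in\G\}$ uniformly equicontinuous, and an Arzel\`a--Ascoli argument then yields uniform convergence of the integrals. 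You must also first establish joint weak convergence of $\bigl(T_{1,n},\alpha_{n,1},\ldots,\alpha_{n,d},\widetilde{\FF}_n\bigr)$ before applying the map. None of this is stated in your sketch. The paper's entropy route for the residual, together with the $T_k(g)$ device for joint convergence, is more self-contained and avoids these detours.
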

\begin{proof}
See section 4.2.
\end{proof}

\bigskip

\subsection*{Discussion of the conditions of Theorem~\ref{theorem:ecp2}}
\begin{itemize}

\item It follows from the proof that the limiting process can be characterized as
\begin{eqnarray*}
\bar\ZZ_n &=& \int g(\u)\, d\alpha(\u) + \sum_{k=1}^d \int \dot{g}_k(\u)\alpha_k(u_k)\, dC(\u)
\end{eqnarray*}
for the limiting $C$-Brownian bridge $\alpha$ in $\ell^\infty([0,1]^d)$ of the empirical   process $\sqrt{n}(\FF_n-C)$  based on pseudo-observations ${\bf U}_1,\ldots,{\bf U}_n$ with
${ \bf U}_{i}= (F_1(X_{i1}),\ldots,F_d(X_{id}))$.\\

\item Theorem~\ref{theorem:ecp2} is slightly more general than Corollary 5.4 in Van der Vaart \& Wellner (2007). It corrects a slight mistake in their proof. While they require that the partial derivatives $\dot{g}_k$ are continuous, their proof requires that they are in fact uniformly equicontinuous. In fact, at page 247, line 13 they require convergence, uniformly in $g$, while their proof of this fact (Lemma 4.1 at the same page) only gives pointwise convergence. While this  is easily fixed,  the other difference with their result, however,  is that we do not require that the uniform entropy integral  $J(1,\G,L_2)$ is finite, which requires an altogether  different proof.
\\

\item
The alert reader may wonder if the uniformly bounded
assumption on the classes $\G$ and $\dot{\G}_k$ may be replaced by suitable envelope conditions. However, if the class $\dot{\G}_k$ of uniformly equicontinuous functions    $f:[0,1]^d\to\RR$ has an integrable envelope, then it must be uniformly bounded on $[0,1]^d$.
A similar reasoning  holds for $\G$:
since the domain of the functions is $[0,1]^d$,   the assumption that $\G$ has an integrable envelope, coupled with the fact that the partial derivatives exist  and are uniformly bounded, immediately forces that all $g\in\G$ must be uniformly bounded. \\

\item
 It is remarkable that Theorem~\ref{theorem:ecp2} holds without any condition on $C$, under rather mild regularity on the functions $g$. This is in contrast
 with the required smoothness assumptions on $C$ for the ordinary empirical copula process (indexed by boxes) in (2).\\
Arguably the best known examples of non-differentiable copulas are
the Marshal-Olkin copula $C(u,v) = \min(u^{1-\alpha} v, u v^{1-\beta})$,
and the Frechet-Hoeffding copulas $C(u,v)=\max(u+v-1,0)$ and $C(u,v)=\min(u,v)$.
Another example is the Cuadras-Aug\'e copula given by
\[ C(u,v)= \{ \min(u,v)\}^\theta \{uv\}^{1-\theta},\quad 0\le \theta\le1.\]
A common   technique
 to construct a copula from a given function $\delta:[0,1]\to[0,1]$ yields non-differentiable
copulas as well  by setting
 \[C(u,v) = \min\left[u, v, \{\delta(u)+\delta(v) \}/2 \right]\]
  or
    \begin{align*}
C(u,v)& = \left\{ \begin{array}{c l}
\displaystyle
u-\inf_{u\le x\le v}  \{ x-\delta(x)\}&\text{if} \  u\le v \\
\displaystyle v-\inf_{v\le x\le u}  \{ x-\delta(x)\}&\text{if} \  u > v .
\end{array}  \right.\\
\end{align*}

 \item
A natural class of functions to consider is $C_1^s([0,1]^d)$, as described in detail by Van der Vaart \& Wellner (1996), pp 154--157.
These are all functions on $[0,1]^d$ that have uniformly bounded partial derivatives up to order $\lfloor s \rfloor$ and the highest partial derivatives are H\"older of order $s-\lfloor s \rfloor$.  Theorem~2.7.1 and Theorem~2.7.2 in Van der Vaart \& Wellner (1996) show
this class $C_1^s([0,1]^d)$ is universally Donsker if $s>d/2$. In particular, this means that for $d=2$, the processes $\ZZ_n$ and $\bar{\ZZ}_n$ converge weakly in $\ell^\infty( C_1^s([0,1]^2)) $, provided the smoothness index $s>1$, that is, all functions have partial derivatives that satisfy a uniform H\"older condition of any order.\\
 \end{itemize}

 \subsection{Bootstrap empirical copula processes}
We provide the   bootstrap counterpart of  Theorems  \ref{thm:ecp1} \& \ref{theorem:ecp2}.
Let the  bootstrap sample $(\X_1^*,\ldots,\X_n^*)$ be obtained by sampling with replacement from
$\X_1$,$\ldots$, $\X_n$. We write
\begin{eqnarray}
\FF_n^*(\x)=\frac1n \sum_{i=1}^n \1\{ \X_i^*\le \x \},\ \x\in\RR^d,
\end{eqnarray}
 for the empirical cdf based on the bootstrap, with marginals
 \begin{eqnarray}
 \FF_{nj}^*(t)= \frac1n \sum_{i=1}^n \1 \{ X_{ij}^*\le t\}, \ t\in\RR,\
  j=1,\ldots,d.
 \end{eqnarray}
We denote its associated empirical copula function by $\CC_n^*$ and
\begin{eqnarray}
\bar \CC_n^*(\u) &=& \frac{1}{n} \sum_{i=1}^n 1\{ \FF_{n1}^*(X^*_ {i1})\le u_1,\ldots,\FF^*_{nd}(X^*_{id})\le u_d \}, \qquad \u\in [0,1]^d.
\end{eqnarray}
For the bootstrap
  empirical copula process
\begin{eqnarray}
\label{Z_n^*}
\bar{ \ZZ}_n^*(g)= \sqrt{n}\int g(\u)\, {\rm d} (\bar{\CC}_n ^*-\bar{\CC}_n) (\u), \ g\in\G \end{eqnarray}
we have the following
bootstrap version of Theorems \ref{thm:ecp1}  \& \ref{theorem:ecp2}.

 \begin{theorem} \label{boot}
Under the conditions of either  Theorem~\ref{thm:ecp1}  or Theorem~\ref{theorem:ecp2}, the conditional distribution of $\{ \bar\ZZ_n^*(g),\ g\in\G\}$ converges weakly to the same Gaussian limit as $\{ \bar\ZZ_n(g),\ g\in \G\}$, in probability.
\end{theorem}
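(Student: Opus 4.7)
The plan is to rerun the proofs of Theorem~\ref{thm:ecp1} and Theorem~\ref{theorem:ecp2}, substituting the bootstrap counterparts of the underlying standard empirical processes and then invoking the known bootstrap invariance principles for those processes. Throughout, ``conditional weak convergence in probability'' is understood as convergence of the conditional law given the data, in outer probability with respect to the bounded Lipschitz metric.

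For the setting of Theorem~\ref{thm:ecp1} the key observation is that the proof of Theorem~\ref{thm:GENERAL} is entirely modular: its probabilistic input enters only through (i) the sample paths of the process satisfying assumption~F and vanishing on the lower boundary, which holds automatically for the c\`adl\`ag bootstrap process $\sqrt{n}(\bar\CC_n^* - \bar\CC_n)$, and (ii) weak convergence of the underlying process to a continuous Gaussian limit. The remaining ingredients (the Lipschitz estimate for $\Gamma$, Proposition~\ref{IBP!}, and the continuous mapping theorem) transfer verbatim to the conditional-in-probability regime. One therefore only needs the already-established conditional weak convergence in probability of $\sqrt{n}(\bar\CC_n^* - \bar\CC_n)$ to the same continuous Gaussian limit as $\sqrt{n}(\bar\CC_n - C)$, valid under the hypotheses of Theorem~\ref{thm:ecp1} (for instance via the arguments of B\"ucher \& Dette (2010)), to conclude.

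For the setting of Theorem~\ref{theorem:ecp2} the argument mirrors the Taylor-linearization underpinning the proof of Section~4.2, which writes
\[
\bar\ZZ_n(g) = \sqrt{n}(P_n - P)(g\circ F) + \sum_{k=1}^d \int \dot g_k(\u)\sqrt{n}(\FF_{nk}(u_k) - u_k)\, dC(\u) + R_n(g),
\]
with $\sup_{g \in \G} |R_n(g)| = o_P(1)$, controlled by the $C$-Donsker hypothesis on $\G$ together with uniform equicontinuity of $\dot\G_k$ combined with the uniform CLT for $\FF_{nk}$. The bootstrap counterpart expands around the sample copula, giving
\[
\bar\ZZ_n^*(g) = \sqrt{n}(P_n^* - P_n)(g\circ \wh F_n) + \sum_{k=1}^d \int \dot g_k(\u) \sqrt{n}(\FF_{nk}^*(u_k) - \FF_{nk}(u_k))\, d\bar\CC_n(\u) + R_n^*(g),
\]
with $\wh F_n(\x) = (\FF_{n1}(x_1),\ldots,\FF_{nd}(x_d))$. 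The two leading terms converge conditionally in probability to the desired Gaussian limit by the Gin\'e--Zinn bootstrap CLT applied to the $C$-Donsker class $\G\circ F$ (combined with a Glivenko--Cantelli reduction $\wh F_n \to F$ and $\bar\CC_n \to C$ uniformly) and by the conditional uniform bootstrap CLT for $\FF_{nk}^* - \FF_{nk}$.

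The main obstacle will be the uniform control $\sup_{g \in \G} |R_n^*(g)| = o_{P^*}(1)$ in probability. The argument parallels the non-bootstrap case: a mean-value representation of the Taylor remainder bounds it uniformly in $g$ by $\sum_k \|\FF_{nk}^* - \FF_{nk}\|_\infty \cdot \omega_k\!\left(\|\wh F_n^* - \wh F_n\|_\infty\right)$, where $\omega_k$ is a common modulus of continuity for the uniformly equicontinuous family $\dot\G_k$. Since $\|\FF_{nk}^* - \FF_{nk}\|_\infty = O_{P^*}(n^{-1/2})$ in probability, both factors are controlled and the remainder is $o_{P^*}(1)$ uniformly in $g\in\G$. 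Assembling these ingredients delivers the conditional weak convergence of $\bar\ZZ_n^*$ to the same Gaussian limit as $\bar\ZZ_n$.
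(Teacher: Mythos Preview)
Your treatment of the Theorem~\ref{thm:ecp1} case is fine and matches the paper: one simply reruns Theorem~\ref{thm:GENERAL} with $\GG_n$ replaced by $\sqrt{n}(\bar\CC_n^*-\bar\CC_n)$ and feeds in the conditional weak convergence of that process.

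For the Theorem~\ref{theorem:ecp2} case, however, there is a genuine gap. Your modulus-of-continuity bound $\sum_k\|\FF_{nk}^*-\FF_{nk}\|_\infty\,\omega_k(\|\wh F_n^*-\wh F_n\|_\infty)$ only controls the pure Taylor error integrated against a \emph{probability} measure. But once you write out $R_n^*(g)$ explicitly, you will find additional pieces of the form
\[
\int \dot g_k(\cdot)\,\UU_{nk}^*(x_k)\,\d n^{-1/2}\UU_n^*(\x)
\quad\text{and}\quad
\int \dot g_k(\cdot)\,\UU_{nk}^*(x_k)\,\d n^{-1/2}\UU_n(\x),
\]
arising from the mismatch between the integrating measure $\FF_n^*$ in the Taylor expansion and the measure $\bar\CC_n$ you put in your second leading term (and ultimately $C$, which is what determines the limit). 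The integrand has sup-norm $O_{P^*}(1)$ but Hardy--Krause variation of order $\sqrt n$, so neither integration by parts nor a crude $\|\cdot\|_\infty$ bound yields $o_{P^*}(1)$. The same issue is hidden in your ``Glivenko--Cantelli reduction $\wh F_n\to F$, $\bar\CC_n\to C$'': replacing $g\circ\wh F_n$ by $g\circ F$ in $\sqrt n(P_n^*-P_n)$, and replacing $\d\bar\CC_n$ by $\d C$ in the derivative term, both produce exactly these terms.

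The paper handles these terms (called $II^*$ and $III^*$ there) by a uniform law of large numbers over the $n$-dependent classes $\calH_{k,n}(M)=\{\sqrt n(D-I)f_k:\ f_k\in\dot\G_k,\ D\in\calD_n(M)\}$, exploiting the entropy bound $\sup_Q\log N(\xi,\calH_{k,n}(M),L_1(Q))=O(\sqrt n)$ from Proposition~\ref{prop2} (monotone functions contribute $O(\sqrt n/\xi)$, the equicontinuous class $\dot\G_k$ contributes $O(1)$). Without this entropy argument, your proof of the bootstrap version of Theorem~\ref{theorem:ecp2} is incomplete.
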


  More precisely, we prove that
\begin{eqnarray}\label{w}
\lim_{n\to\infty} \EE \left[  \sup_{h} \left| \EE [ h(\bar\ZZ_n) ] - \EE^* [ h(\bar\ZZ_n^*) ] \right| \right]=0.
\end{eqnarray}% is asymptotically negligible.
Here $\EE^*$ is the conditional  expectation with respect to the bootstrap sample and
the supremum in (\ref{w}) is taken over %$BL_1= BL_1(\ell^\infty(\G))$, the class of
 all uniformly bounded, Lipschitz   functionals $h:\ell^\infty(\F_n)\to\RR$ with Lipschitz constant 1, that is,
\begin{eqnarray}\label{BL1}
\sup_{x\in \ell^\infty(\G)} |h(x)|\le 1\end{eqnarray}
 and, for all $x,y\in \ell^\infty(\G)$,
\begin{eqnarray}
|h(x)-h(y)|\le   \sup_{g\in\G} | x(g)- y(g) |.\label{BL2}
\end{eqnarray}
{As usual in the empirical process literature, it is tacitly understood that we take outer probability measures  whenever measurability issues arise.}

\begin{proof}
See Section 4.4.
\end{proof}

\bigskip
The bootstrap approximation can be used to obtain asymptotic uniform confidence bands for the copula function. \\
The proof of Theorem 7 shows that under the conditions of  Theorem~\ref{thm:ecp1}, we do not need the iid assumption, only weak convergence of both $\sqrt{n}(\bar{\CC}_n-C)$ and $\sqrt{n}(\bar{\CC}_n^*-\bar{\CC}_n)$ to the same Gaussian limit  is required.\\

\subsection{Some applications}
\begin{itemize}
\item[]{\bf Semi-parametric MLE.}
%%\subsubsection*{Semi-parametric MLE}
This type of results is useful in the same way the extension of the empirical process indexed by general Donsker classes from
indicator functions on the half-spaces $(-\infty, x]$, $x\in\RR^d$, has proved extremely useful. See, for instance, the monograph  Van der Vaart \& Wellner~(1996).
In the context of copula estimation,
an important  example is  the following semi-parametric maximum likelihood estimation problem (Tsukahara~2005). Suppose that  the copula  $C$ is parametrized by a finite dimensional parameter $\theta\in\Theta$, a subset of $ \RR^k$, with density $c_\theta$ and that the marginal distributions $F_j$ have densities $f_j$. The log-likelihood function in this setting is
\[ \log \ell (\theta) = \sum_{i=1}^n \log c_\theta( F_1( X_{i1}),\ldots, F_d(X_{id})) + \sum_{i=1}^n \sum_{j=1}^d \log f_j( X_{ij})
\]
and a common strategy therefore is to replace the unknown marginals $F_j$ by $\FF_{nj}$ and maximize
\[ \sum_{i=1}^n \log c_\theta( \FF_{n1}( X_{i1}),\ldots, \FF_{nd}(X_{id}))
\] over $\theta$. Assuming we can take the derivative with respect to $\theta$, we define
\[ \Psi(\theta)=\int \phi_\theta(\u)\, {\rm d}C(\u)\]
and
\[ \Psi_n(\theta) =\int \phi_\theta(\u)\, {\rm d}\bar\CC_n(\u).\]
We emphasize  that $\Psi_n$ is an integral with respect to $\bar{\CC}_n$, not $\CC_n$.  Here $\phi_{\theta}$ is the derivative of $\log c_\theta$ with respect to $\theta$.

Van der Vaart~\&~Wellner~(1996, Example 3.9.35) show that the solution  $\wh\theta_n$ of $\Psi_n(\theta)=0$ is asymptotically normal, provided the process $\sqrt{n}(\Psi_n-\Psi)(\theta)$ converges in distribution to a Gaussian $\ZZ$ with continuous sample paths  in $\ell^\infty(\Theta)$ and regularity of $\Psi$ ($\Psi(\theta)=0$ has a unique solution $\theta_0$, $\Psi$ is a local homeomorphism at $\theta_0$,  differentiable  at $\theta_0$ with derivative $\dot{\Psi}_{\theta_0}$).
%\begin{cor} \label{grr}  Suppose $\Psi(\theta)=0$ has a unique solution $\theta_0$, $\Psi$ is a local homeomorphism at $\theta_0$,  differentiable  at $\theta_0$ with derivative $\dot{\Psi}_{\theta_0}$ and $\sqrt{n}(\Psi_n-\Psi)(\theta)$ converges in distribution to a Gaussian $\ZZ$ with continuous sample paths  in $\ell^\infty(\Theta)$, then
%\[ \sqrt{n}(\wh\theta-\theta_0) \to - \dot{\Psi}_{\theta_0}^{-1} (\ZZ(\theta_0)),\] in distribution, as $n\to\infty$.
%\end{cor}
Consequently, if    the class of functions $\phi_\theta$ indexed by $\theta\in\Theta$ satisfies either  the conditions of Theorem~\ref{thm:ecp1}
and $\sqrt{n}(\CC_n-C)$ converges weakly, or the conditions of Theorem~\ref{theorem:ecp2}
  (with no assumptions on $C$),
and
\[ \lim_{\|\theta' - \theta\|\to0} \int ( \phi_\theta- \phi_{\theta'} )^2 \, {\rm d} C= 0,\]
and $\Psi$ satisfies the regularity conditions above, then $\wh\theta$ is asymptotically normal.
\\

\item[]{\bf Testing of non-smooth copulas.}
%%\subsubsection*{Testing of non-smooth copulas}
The usual Kolmogorov-Smirnov test statistic
\[\sqrt{n} \sup_{\u} | \CC_n(\u)- C(\u) |
\]
converges provided $C$ is sufficiently regular (conform Segers (2012) conditions).
 If we want to test for a non-smooth $C$,  one that   does not meet the mild condition 4.3 of  B\"ucher, Segers and Vogulshev  (2014), then  Theorem~\ref{theorem:ecp2}  poses a solution by considering
 \[ \sqrt{n} \sup_{g\in \G} \left|\int g\,{\rm d}( \CC_n- C) \right|
\]
 for a sufficiently rich class $\G$ instead. For instance, the class of all differentiable functions $g$ with   Lipschitz  partial derivatives on $[0,1]^d$ is (universally) Donsker, whilst it is rich enough for our testing purposes as it characterizes weak convergence.\\
 From a computational point of view, we may consider the class $g(\x)=g_\t(\x)=\exp(<\t,\x>)$, with $\t\in[0,1]^d$ so that we compare the moment generating functions (which are defined for any copula, as the random variables are bounded).  Indeed, if the function $\int e^{<\t,\u>}\, {\rm d} C(\u)$ is piecewise differentiable in $\t$, then this would lead to  an easily computable test statistic and  a consistent test. 
   \\

\end{itemize}

\bigskip

\section{Proofs of Theorems~\ref{theorem:ecp2} \&~\ref{boot} }

\subsection{Notation}

Throughout, we assume
without loss of generality that all marginals $F_j$ are uniform distributions,  $j=1,\ldots,d$. This implies that $F=C$.
This common simplification in the copula literature  is justified by, for instance,   Lemma 8 of  Fermanian et al (2013).
Indeed, $\ZZ_n(g)$ and $\bar{\ZZ}_n(g)$  remain the same if we replace the original observations $\X_i=(X_{i1},\ldots,X_{id})$ by
 the pseudo-observations ${\bf Y}_i=(F_1(X_{i1}),\ldots,F_d(X_{id}))$, $i=1,\ldots,n$. Observe that, indeed, the distribution function of each ${\bf Y}_i$ is the copula $C$ and each marginal $Y_{ij}$ is uniformly distributed on $[0,1]$, $i=1,\ldots,n$, $j=1,\ldots,d$.
Having made this blanket assumption ($F_j(x)=x$, $j=1,\ldots,d$), we denote by  ${\UU}_n$   the empirical process $\sqrt{n}(\FF_n-F)$ in $\ell^\infty([0,1]^d)$ with marginals ${\UU}_{nj}=\sqrt{n}(\FF_{nj}-F_j)$, $j\in\{1,\ldots,d\}$. \\

\subsection{Proof of Theorem~\ref{theorem:ecp2}}

For any $g\in\G$,  we write  $\dot{g}_k$ be the partial derivative of $g$ with respect to the $k$th coordinate and we define, for $k\in\{1,\ldots,d\}$,
 the classes
\begin{align}
\label{eq:calF_int_k}
\calG_{\textit{int},k} &= \left\{ T_{k}(g) : g\in\calG \right\}
\intertext{based on  the functions}
T_{k}(g)(\x) &= \int \dot{g}_k(\u) \1\{x_k\le u_k\} \, {\rm d}C(\u).
\end{align}
We define the empirical process
\begin{align}
\widetilde{\ZZ}_n(g) &= \int  \left[ g + \sum_{k=1}^d T_{k}(g)  \right]\, {\rm d} {\UU}_n.
%\widetilde{\ZZ}_n(g) &= \int g\, {\rm d}{\UU}_n + \sum_{k=1}^d \int \dot{g}_k {\UU}_{nk} \, {\rm d} C .
\end{align}
Lemma \ref{convergentie} shows that $\widetilde{\ZZ}_n$ converges weakly, and it suffices to show that $\bar\ZZ_n$ and $\widetilde{\ZZ}_n$ are asymptotically equivalent, as $n\to\infty$.
 Some simple algebra shows that
\begin{align}
\int  T_{k}(g)\, {\rm d} \FF_n
&= \dfrac{1}{n} \sum_{i=1}^n \int \dot{g}_k(\x) \1\{X_{ik}\le x_k\}\, {\rm  d}C(\x) \nonumber \\
&= \int \dot{g}_k(\x) \FF_{nk}(x_k) \, {\rm d}C(\x), \nonumber \\
\intertext{and}
\int  T_{k}(g)\, {\rm  d}C  &= \EE\left[  \int \dot{g}_k(\x) \1\{X_k\le x_k\} \, {\rm d}C(\x)\right] \nonumber \\
&= \int \dot{g}_k(\x) F_k(x_k)\,  {\rm d}C(\x), \nonumber
\end{align}
so  that
\begin{align}
 \int T_{k}(g) \, {\rm d} {\UU}_n  &= \int \dot{g}_k(\x) {\UU}_{nk}(x_k) {\rm d}C(\x). \nonumber
\end{align}
It is now easily verified that
\begin{align}
(\bar{\ZZ}_n - \widetilde{\ZZ}_n)(g) = I(g) + \textit{II}(g)  \nonumber
\end{align}
for
\begin{align}
\textit{I}(g) &= \int \left[ \sqrt{n} \left[ g\left( \FF_{n1}(x_1),\dots,\FF_{nd}(x_d) \right) - g(\x) \right] - \sum_{k=1}^d \dot{g}_k(\x){\UU}_{nk}(x_k) \right] \, {\rm d}\FF_n(\x) \nonumber \\
\textit{II} (g)&=  \int \left[ \sum_{k=1}^d \dot{g}_k(\x) {\UU}_{nk} (x_k) \right] \, {\rm d} n^{-1/2} {\UU}_n(\x) \nonumber .
\end{align}
Hence, if
 $$\sup_{g\in\G}  | \textit{I} (g)+\textit{ II} (g) \, |\rightarrow 0,$$ in probability, as $n\to\infty$, then $\bar{\ZZ}_n$ converges weakly to the same limit as $\widetilde{\ZZ}_n$.  This is verified in the Propositions \ref{prop1} \& \ref{prop2}, and the proof of Theorem~1 is complete.
 \qed

\bigskip

\begin{lemma}\label{convergentie}
Under the assumptions of Theorem~\ref{theorem:ecp2}, the empirical process $\widetilde{\ZZ}_n$ converges weakly. % in $\ell^\infty(\G)$.
\end{lemma}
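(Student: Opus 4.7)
The plan is to exploit the explicit representation
\[\widetilde{\ZZ}_n(g) \;=\; \int g\,\d\UU_n \;+\; \sum_{k=1}^d \int \dot{g}_k(\x)\,\UU_{nk}(x_k)\,\d C(\x),\]
which is immediate from the computations displayed just above the lemma. Since both ingredients on the right-hand side are continuous linear images of a single underlying empirical process indexed by a Donsker class, weak convergence of $\widetilde{\ZZ}_n$ in $\ell^\infty(\G)$ will follow from the continuous mapping theorem.

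First, I would observe that each univariate marginal empirical process can be written as $\UU_{nk}(t) = \UU_n(\1\{x_k \le t\})$, so the joint object $(\UU_n|_\G, \UU_{n1}, \ldots, \UU_{nd})$ is just the restriction of $\UU_n$ to the class
\[\calH \;:=\; \G \,\cup\, \bigcup_{k=1}^d \bigl\{\x \mapsto \1\{x_k \le t\} : t \in [0,1]\bigr\}.\]
The class $\G$ is $C$-Donsker by assumption, and each marginal indicator class is a uniformly bounded VC class and hence $C$-Donsker. Since the finite union of Donsker classes with uniformly bounded envelope is again Donsker (e.g.\ Van der Vaart \& Wellner (1996, Theorem~2.10.22)), $\calH$ is $C$-Donsker. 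Consequently,
\[(\UU_n, \UU_{n1}, \ldots, \UU_{nd}) \weakly (\alpha, \alpha_1, \ldots, \alpha_d)\]
jointly in $\ell^\infty(\G) \times \prod_{k=1}^d \ell^\infty([0,1])$ to a tight Gaussian limit, where $\alpha$ is the $C$-Brownian bridge on $\G$ and $\alpha_k$ is its $k$th univariate marginal.

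Next, I would introduce the map $\Psi : \ell^\infty(\G) \times \prod_{k=1}^d \ell^\infty([0,1]) \to \ell^\infty(\G)$ defined by
\[\Psi(W, w_1, \ldots, w_d)(g) \;:=\; W(g) + \sum_{k=1}^d \int \dot{g}_k(\x)\, w_k(x_k)\, \d C(\x).\]
Setting $M := \max_k \sup_{g \in \G} \|\dot{g}_k\|_\infty < \infty$ (finite by the uniform boundedness assumption on $\dot{\G}_k$), the Lipschitz estimate
\[\bigl\|\Psi(W, w_1, \ldots, w_d) - \Psi(W', w_1', \ldots, w_d')\bigr\|_{\ell^\infty(\G)} \;\le\; \|W - W'\|_{\ell^\infty(\G)} + M\sum_{k=1}^d \|w_k - w_k'\|_\infty\]
shows that $\Psi$ is continuous. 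Since $\widetilde{\ZZ}_n = \Psi(\UU_n, \UU_{n1}, \ldots, \UU_{nd})$, the continuous mapping theorem yields weak convergence of $\widetilde{\ZZ}_n$ to the Gaussian limit $\Psi(\alpha, \alpha_1, \ldots, \alpha_d)$ in $\ell^\infty(\G)$.

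The essentially only non-routine ingredient is the stability of the Donsker property under finite unions of uniformly bounded classes, which is classical. Note in particular that no smoothness of $C$ is invoked, and the equicontinuity of $\dot{\G}_k$ is not used here; the latter will enter only in the subsequent propositions controlling the remainder terms $\textit{I}(g)$ and $\textit{II}(g)$ in the asymptotic equivalence $\bar{\ZZ}_n - \widetilde{\ZZ}_n \to 0$.
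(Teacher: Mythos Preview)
Your argument is correct, but it is genuinely different from the paper's. The paper works directly at the level of the indexing class: it observes that $\widetilde{\ZZ}_n(g)=\int\bigl[g+\sum_k T_k(g)\bigr]\,\d\UU_n$ is the ordinary empirical process indexed by $\calG'=\{g+\sum_k T_k(g):g\in\G\}$, embeds $\calG'$ in the sum class $\calG''=\G+\sum_k\calG_{\textit{int},k}$, and concludes via the permanence of the Donsker property under finite sums (Van der Vaart \& Wellner, Theorem~2.10.6). To make $\calG_{\textit{int},k}$ Donsker the paper \emph{does} invoke the uniform equicontinuity of $\dot{\G}_k$ (which, together with uniform boundedness, forces $\dot{\G}_k$ and hence $\calG_{\textit{int},k}$ to be totally bounded in $L_\infty$). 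Your route bypasses this entirely: you enlarge only by the marginal indicator classes, obtain joint weak convergence of $(\UU_n|_\G,\UU_{n1},\ldots,\UU_{nd})$, and then push through the Lipschitz map $\Psi$. As you correctly point out, this needs only uniform boundedness of $\dot{\G}_k$, so your lemma holds under slightly weaker hypotheses than stated.

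One small trade-off worth noting: the paper's formulation exhibits $\widetilde{\ZZ}_n$ as an empirical process over a fixed Donsker class, which is what makes the later appeal to the bootstrap CLT in the proof of Theorem~\ref{boot} a one-liner. Your continuous-mapping argument yields weak convergence just as cleanly, and the bootstrap version follows by applying the same map $\Psi$ to $(\UU_n^*,\UU_{n1}^*,\ldots,\UU_{nd}^*)$, but this extra sentence would have to be supplied when you reach that point.
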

\begin{proof}
The class $$\calG'=\left\{g + \sum_{k=1}^d T_k(g):\ g\in\calG\right\}$$ is a subset of the class $$\calG''=\left\{g + \sum_{k=1}^d t_k:g\in\calG,\ t_k\in\calG_{\textit{int},k}\right\}.$$
By definition,  the class $\calG$ is $C$-Donsker and the classes $\dot{\calG}_k$, $k\in\{1,\dots,d\}$, are  uniformly equicontinuous. This implies that the classes $\calG_{\textit{int},k}$, $k\in\{1,\dots,d\}$, are $C$-Donsker.
This in turn implies that
 the class $\calG''$ is   $C$-Donsker by Theorem~2.10.6 of van~der~Vaart~\&~Wellner (1996), as the pointwise sum of two Donsker classes is again Donsker.
\end{proof}

\bigskip

 \begin{proposition}
\label{prop1}
Under the assumptions of Theorem~\ref{theorem:ecp2},
we have
\begin{align}
\sup_{g\in \G} | \textit{I}(g) \, |\inprobto0, \text{ as } n\to\infty.
\label{eq:II_convergence_in_prob}
\end{align}
\end{proposition}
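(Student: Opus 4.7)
The plan is to recognize the integrand of $\textit{I}(g)$ as the remainder of a first-order Taylor expansion of $g$ at $\x$, and then to show that this remainder is uniformly $o_P(1)$ over $g\in\G$ and $\x\in[0,1]^d$. Because $\FF_n$ is a probability measure on $[0,1]^d$, one has
\[
\sup_{g\in\G}|\textit{I}(g)| \;\le\; \sup_{g\in\G,\,\x\in[0,1]^d} \left| \sqrt{n}\bigl[g(\FF_{n1}(x_1),\ldots,\FF_{nd}(x_d)) - g(\x)\bigr] - \sum_{k=1}^d \dot{g}_k(\x)\UU_{nk}(x_k) \right|,
\]
so it suffices to control the integrand uniformly in $(g,\x)$ and the integration itself plays essentially no role.

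To do so, write $\y=(\FF_{n1}(x_1),\ldots,\FF_{nd}(x_d))$ and telescope the difference $g(\y)-g(\x)$ one coordinate at a time; applied to each one-dimensional slice, the mean value theorem yields
\[
g(\y)-g(\x) \;=\; \sum_{k=1}^d \dot{g}_k(\bm{\eta}_k)(y_k-x_k),
\]
where each $\bm{\eta}_k\in[0,1]^d$ is an intermediate point satisfying $\|\bm{\eta}_k-\x\|_\infty \le \max_j|y_j-x_j| = n^{-1/2}\max_j|\UU_{nj}(x_j)|$. Multiplying by $\sqrt{n}$ and subtracting the linearization, the integrand of $\textit{I}(g)$ equals
\[
\sum_{k=1}^d \bigl[\dot{g}_k(\bm{\eta}_k)-\dot{g}_k(\x)\bigr]\UU_{nk}(x_k).
\]

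The univariate Donsker theorem supplies $\max_{1\le k\le d}\|\UU_{nk}\|_\infty = O_P(1)$, hence $\sup_{\x,k}\|\bm{\eta}_k-\x\|_\infty = O_P(n^{-1/2}) = o_P(1)$. The uniform equicontinuity assumption on each class $\dot{\G}_k$ then provides, for every $\eps>0$, a single $\delta=\delta(\eps)>0$ such that $|\dot{g}_k(\u)-\dot{g}_k(\v)|<\eps$ whenever $\|\u-\v\|_\infty<\delta$, \emph{simultaneously} for every $g\in\G$ and every $k$. On the event $\{n^{-1/2}\max_j\|\UU_{nj}\|_\infty<\delta\}$, whose probability tends to one, this gives $\sup_{g,\x,k}|\dot{g}_k(\bm{\eta}_k)-\dot{g}_k(\x)|<\eps$, and combining with $\max_k\|\UU_{nk}\|_\infty=O_P(1)$ yields
\[
\sup_{g\in\G,\,\x\in[0,1]^d}\left|\sum_{k=1}^d [\dot{g}_k(\bm{\eta}_k)-\dot{g}_k(\x)]\UU_{nk}(x_k)\right| \;=\; o_P(1)\cdot O_P(1)\;=\;o_P(1),
\]
which together with the opening inequality gives $\sup_{g\in\G}|\textit{I}(g)|\inprobto 0$.

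The only real obstacle is that the modulus of continuity $\delta(\eps)$ must be chosen once and for all, uniformly over $g\in\G$, in order for the $o_P(1)$ bound to survive the supremum over $g$. This is precisely why the hypothesis of \emph{uniform} equicontinuity of $\dot{\G}_k$ (rather than mere continuity of each individual $\dot{g}_k$) is needed; it is also the natural spot for a subtle gap to hide, since pointwise continuity would let $\delta$ depend on $g$ and the argument would collapse when one takes $\sup_{g\in\G}$. The remaining ingredients -- the telescoping Taylor argument, the $O_P(1)$ bound on the marginal empirical processes, and the fact that $\FF_n$ is a probability measure -- are standard.
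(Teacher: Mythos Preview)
Your proof is correct and follows essentially the same route as the paper's: both rewrite the integrand via the mean value theorem as $\sum_k[\dot g_k(\bm\eta_k)-\dot g_k(\x)]\UU_{nk}(x_k)$, then exploit the uniform equicontinuity of the classes $\dot\G_k$ together with $\|\UU_{nk}\|_\infty=O_P(1)$ and $\|\bm\eta_k-\x\|_\infty=o_P(1)$. The only cosmetic differences are that the paper uses a single intermediate point on the line segment (rather than your coordinatewise telescoping) and packages the equicontinuity via the modulus function $\phi_k$ of Lemma~\ref{lemma:uniform_equicontinuity_function_phi} instead of an explicit $\delta(\eps)$.
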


\begin{proof}
We have
\begin{align}
\sup_{g\in\G} \left| \textit{I}(g) \right| \le \sum_{k=1}^d \sup_{g\in\G} \left| \textit{I}_k(g) \right| \nonumber
\end{align}
for
\begin{align}
\textit{I}_k(g) = \int \left[ \left( \dot{g}_k(\wt \X_{n,\x}) - \dot{g}_k(\x) \right) {\UU}_{nk}(x_k) \right] \, {\rm d} \FF_n(\x). \nonumber
\end{align}
Here   $\wt\X_{n,\x}$ are  (random) points  on the line segment between $\x$ and $(\FF_{n1}(x_1),\cdots,\FF_{nd}(x_d))^T$, and we
used the mean value theorem.
Hence, it suffices to prove that
\begin{align}
\sup_{g\in\G}\left| \textit{I}_k(g) \right| \overset{\mbox{\tiny{P}}}{\longrightarrow} 0, \text{ as } n\to\infty
\label{eq:II_k_convergence_in_prob}
\end{align}
for each $k\in\{1,\dots,d\}$.
By Lemma \ref{lemma:uniform_equicontinuity_function_phi}  below, there exists a bounded, non-negative, and monotone increasing function $\phi_k(t)$ with $ \lim_{t\downarrow 0} \phi_k(t)=0$ such that
\begin{eqnarray*}
 \sup_{g\in\G} \left| \dot{g}_k(\wt \X_{n,\x}) - \dot{g}_k(\x) \right| &\le& \phi_k(\| \wt \X_{n,\x} - \x\| )\\
 &\le& \phi_k(\|  \FF_n(\x) - \x\| ) \\
& \le& \phi_k(\|  n^{-1/2}\UU_n\|_\infty ) ,
 \end{eqnarray*}
whence
\begin{eqnarray*}
\sup_{g\in\G}\left| \textit{I}_k(g) \right|
 &\le&  \| {\UU}_{nk} \|_\infty {\phi_k(\|  n^{-1/2}\UU_n\|_\infty )} \int  \, {\rm d} \FF_n(\x)\\
 & =&  \| {\UU}_{nk} \|_\infty  \phi_k( \| n^{-1/2} \UU_n\|_\infty).
\end{eqnarray*}
The empirical process $\UU_{nk}$ converges weakly and hence $ \| {\UU}_{nk} \|_\infty =O_p(1)$. By the Glivenko-Cantelli theorem in $\RR^d$, $\| n^{-1/2} \UU_n\|_\infty=o_p(1)$, and hence $\phi_k( \| n^{-1/2} \UU_n\|_\infty)=o_p(1)$. We conclude that (\ref{eq:II_k_convergence_in_prob}) holds for every $k\in\{1,\ldots,d\}$ and hence (\ref{eq:II_convergence_in_prob}) is verified.
\end{proof}

\bigskip

\begin{proposition}\label{prop2}
Under the assumptions of Theorem~\ref{theorem:ecp2},
 we have
\begin{align}
\sup_{g\in \G} | \textit{II}(g) \, |\inprobto0, \text{ as } n\to\infty.
\label{eq:I_b_convergence_in_prob}
\end{align}
\end{proposition}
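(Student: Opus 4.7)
My plan is to handle
$$
\textit{II}(g) = \sum_{k=1}^{d}\int \dot{g}_k(\x)\,\UU_{nk}(x_k)\,d(\FF_n-F)(\x) = n^{-1/2}\sum_{k=1}^{d}\UU_{n}[\phi_{g,k}], \qquad \phi_{g,k}(\x) := \dot{g}_k(\x)\UU_{nk}(x_k),
$$
one summand at a time; the prefactor $n^{-1/2}$ reduces the task to showing $\sup_{g\in\G}|\UU_n[\phi_{g,k}]|=O_p(1)$ for each $k$, which will then deliver $\sup_{g\in\G}|\textit{II}(g)|=O_p(n^{-1/2})=o_p(1)$.

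My strategy is a Hoeffding-type decomposition that exploits the uniformity of the marginals of $F$. Substituting $\UU_{nk}(x_k)=n^{-1/2}\sum_j(\1\{X_{jk}\le x_k\}-x_k)$ inside the integral recasts $\UU_n[\phi_{g,k}]$ as a $V$-statistic in $(\X_i,\X_j)$ with kernel $K_{g,k}(\x,\y)=\dot{g}_k(\x)(\1\{y_k\le x_k\}-x_k)$. Uniformity gives $\EE[K_{g,k}(\X_1,\X_2)\mid\X_1]=0$, so only the projection onto the second argument is nontrivial: the function $q_{g,k}(t)=\int\dot{g}_k(\x)(\1\{t\le x_k\}-x_k)\,dF(\x)$ is bounded by $2M$ and $M$-Lipschitz in $t$ on $[0,1]$. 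Performing the decomposition (and simplifying the subtracted term $\int\phi_{g,k}\,dF$ by a one-dimensional integration by parts in $x_k$, using that $\UU_{nk}$ vanishes at the endpoints $0$ and $1$) yields
$$
\UU_n[\phi_{g,k}] \;=\; -n^{-1/2}\UU_{nk}[q_{g,k}] \;+\; n^{-1}\sum_{i=1}^n \dot{g}_k(\X_i)(1-X_{ik}) \;+\; n^{-1}U_n^{g,k},
$$
where $U_n^{g,k} = \sum_{i\ne j}\tilde{K}_{g,k}(\X_i,\X_j)$ is a completely degenerate $U$-statistic with canonical kernel $\tilde{K}_{g,k}(\x,\y) = K_{g,k}(\x,\y)-q_{g,k}(y_k)$.

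I would then bound the three terms uniformly in $g$: the diagonal sum is $O(n)$ in absolute value (since $|\dot{g}_k(\X_i)(1-X_{ik})|\le M$), so its contribution to $\textit{II}_k$ is $O(n^{-1/2})$; the linear projection gives $\sup_g|\UU_{nk}[q_{g,k}]|=O_p(1)$ because $\{q_{g,k}\}$ is a uniformly bounded $M$-Lipschitz family on $[0,1]$, hence $F_k$-Donsker, so its contribution is $O_p(n^{-1})$; and for the degenerate $U$-process, a maximal inequality of Arcones--Gin\'e or de la Pe\~na--Gin\'e type yields $\sup_g|U_n^{g,k}|=O_p(n)$ provided the kernel class $\{\tilde{K}_{g,k}\}$ has suitable entropy, contributing $O_p(n^{-1/2})$. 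Combining (and summing over $k\in\{1,\dots,d\}$) gives $\sup_g|\textit{II}(g)|=O_p(n^{-1/2})=o_p(1)$, as required.

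The main obstacle is exactly this last uniform bound on the degenerate $U$-process: the hypotheses of Theorem~\ref{theorem:ecp2} only supply uniform equicontinuity and boundedness for $\dot{\G}_k$, which yield total boundedness in sup-norm (via Arzel\`a--Ascoli) but not, a priori, a finite bracketing-entropy integral. The resolution leans on the product structure of $\tilde{K}_{g,k}$---the VC-class indicator $\1\{y_k\le x_k\}$, the Lipschitz projection $q_{g,k}$ controllable through the Donsker entropy of $\G$, and the equicontinuous multiplier $\dot{g}_k$ handled at the single scale set by its uniform modulus---combined with the asymptotic tightness of $\UU_{nk}$ and the $C$-Donsker assumption on $\G$ itself. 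A two-scale bracketing argument then produces the needed uniform bound, which is precisely where this proof deviates from (and removes the stronger entropy hypothesis $J(1,\G,L_2)<\infty$ used in) Corollary~5.4 of Van der Vaart \& Wellner (2007).
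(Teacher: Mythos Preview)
Your Hoeffding decomposition is a natural idea, but the proof has a genuine gap precisely where you flag it: the uniform bound $\sup_{g}|U_n^{g,k}|=O_p(n)$ for the degenerate $U$-process. The Arcones--Gin\'e / de la Pe\~na--Gin\'e maximal inequalities you invoke require an entropy-integral condition on the kernel class, and uniform equicontinuity of $\dot\G_k$ only yields finiteness of $N_\infty(\eps,\dot\G_k)$ for each $\eps>0$, with no rate. Your ``two-scale bracketing'' sketch does not close this: linearity of $g\mapsto U_n^{g,k}$ gives the crude Lipschitz bound $|U_n^{g,k}-U_n^{g',k}|\le Cn^2\|\dot g_k-\dot g'_k\|_\infty$, so to make the oscillation $o(n)$ you would need a net at scale $o(1/n)$, whose cardinality is uncontrolled under the stated hypotheses. (Your displayed decomposition also has a bookkeeping slip: the linear projection terms $\pm\sqrt{n}\,\UU_{nk}[q_{g,k}]$ cancel exactly, so no $n^{-1/2}\UU_{nk}[q_{g,k}]$ term survives; what remains is the diagonal and the degenerate piece.)

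The paper's route sidesteps $U$-process theory entirely by a key reframing: it is a \emph{Glivenko--Cantelli} problem, not a Donsker one. On the high-probability event $\{\|\UU_{nk}\|_\infty\le M\}$, the random integrand $\dot g_k(\x)\UU_{nk}(x_k)$ lies in the deterministic class $\calH_{k,n}(M)=\{\sqrt{n}(D-I)f_k:\ f_k\in\dot\G_k,\ D\ \text{a cdf with}\ \sqrt{n}\|D-I\|_\infty\le M\}$, so $\sup_g|\textit{II}_k(g)|\le\sup_{h\in\calH_{k,n}(M)}|(\FF_n-F)h|$. A GC argument (Theorem 2.4.3 in Van der Vaart \& Wellner) then only needs $\log N(\xi,\calH_{k,n}(M),L_1(\FF_n))=o_p(n)$. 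This holds because $\log N(\eps,\calD_n(M),L_1)\le K/\eps$ for the monotone class (evaluated at $\eps\asymp\xi/\sqrt{n}$, contributing $O(\sqrt{n})$) and $\log N_\infty(\xi,\dot\G_k)<\infty$ by equicontinuity. The growing-with-$n$ class has entropy $O(\sqrt{n})=o(n)$, which is exactly enough for GC but would be far too weak for a Donsker or $U$-process bound---that asymmetry is the crux of why the paper's argument succeeds without any entropy-integral hypothesis on $\G$.
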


\begin{proof}
It suffices to show that
\begin{align}
\sup_{g\in\G} \left| \textit{II}_{k}(g)  \right| \overset{\mbox{\tiny{P}}}{\longrightarrow} 0,  \text{ as } n\to\infty \nonumber
\end{align}
for each $k\in\{1,\dots,d\}$, for
\begin{align}
\textit{II}_{k}(g) = \int \left[ \dot{g}_k(\x) \UU_{nk}(x_k) \right] dn^{-1/2} \UU_n(\x) . \nonumber
\end{align}
We define the class of functions
\begin{align}
\calD_n(M) &= \left\{ D: D~\text{is a c.d.f. on $[0,1]$ with } \sqrt{n} \|D-I\|_{\infty} \le M  \right\},  \label{D_n} \\
\calH_{k,n}(M) &= \left\{h= \sqrt{n} (D-I) f_k:\  f_k\in\dot{\calG}_k,\ D\in\calD_n(M) \right\}. \label{H_n}
\end{align}
Fix an arbitrary (small) $\eps\in(0,1)$. There exists $M=M(\eps)<\infty$ such that
\[ \limsup_{n\to\infty} \PP \{ \|\UU_{nk}\|_{\infty}\ge  M\} \le \eps.
\]
On the event $\{ \| \UU_{nk}\|_\infty \le M\}$, we have
\begin{align}\label{eq:ULLN_calG_k_n}
\sup_{g\in \G} | \textit{II}_{k}(g) |\le \sup_{h\in \calH_{k,n}(M)} \left | \int h\,{ \rm d} n^{-1/2} \UU_n  \right|
\end{align}
and
to prove the proposition, it suffices to verify  that the term on the right converges to zero, in probability, $n\to\infty$.
By a straightforward modification of Theorem~2.4.3 of Van der Vaart \& Wellner (1996), the right-hand side of
 (\ref{eq:ULLN_calG_k_n}) converges to zero,  if
 \begin{enumerate}
 \item the class
 $\calH_{k,n}(M)$ has an integrable envelope and
 \item
 for all $\xi>0$,
\begin{align}
\log N(\xi,\calH_{k,n}(M),L_1(\FF_n)) = o_p(n) \nonumber
\end{align}
holds. Here $N(\xi,\calH_{k,n}(M),L_1(\FF_n))$ is the $\xi$-covering number of $\calH_{k,n}(M)$ in $L_1(\FF_n)$, that is, the number of closed balls of radius $\xi$ in $L_1(\FF_n)$ needed to cover $\calH_{k,n}(M)$.
\end{enumerate}
Since $\dot{\calG}_k$ is uniformly bounded,  $\sup_{f_k\in\dot{\calG}_k}\| f_k\|_\infty \le M_k$ for some $M_k<\infty$, and we find
\[ \sup_{h\in \calH_{k,n}(M)} \| h\|_\infty \le M{ \cdot M_k},\]
so the envelope condition is fulfilled. We now verify that the metric entropy condition holds.
We fix arbitrary $h,h'\in\calH_{k,n}(M)$, and write
\begin{align}
h  &= \sqrt{n} (D -I) f_k, \nonumber \\
h' &= \sqrt{n} (D'-I) f'_k \nonumber
\end{align}
for $f_k, f'_k\in\dot{\calG}_k$ and $D,D'\in\calD_n(M)$.  We can easily deduce that, for any probability measure $Q$,
\begin{align}
\int | h - h' |\, {\rm d}Q  &\le \sqrt{n} M_k  \int |D-D'|\, {\rm d}Q     +  M \int| f_k-f'_k |\, {\rm d}Q
. \nonumber
\end{align}
Hence, we conclude that, for any probability measure $Q$ and $\xi>0$,
\begin{align}
\log N(\xi,\calH_{k,n}(M),L_1(Q)) &\le \log N(\xi/(2 M_k \sqrt{n}),\calD_n,L_1(Q) )+ \log N(\xi/(2M),{  \dot \calG_k},L_1(Q))
 \nonumber \\
&\le  \log N(\xi/(2 M_k \sqrt{n}),\calD_n(M),L_1(Q) )+ \log N_\infty(\xi/(2M),\dot{\calG}_k).
\label{eq:entropy_intermediate_1}
\end{align}
Here $N_\infty(\eps,\dot{\G}_k)$ is the $\eps$-covering number of $\dot{\G}_k$ in $L_\infty([0,1]^d)$.
By Lemma~\ref{lemma:entropy_uniformly_equicontinuity_calF_k} and Lemma~\ref{mono} in the appendix, we have, from~(\ref{eq:entropy_intermediate_1}),
that
\begin{eqnarray*}
\log N(\xi,\calH_{k,n}(M),L_1(\FF_n)) &\le & \sup_Q \log N(\xi,\calH_{k,n}(M), L_1(Q)) \\
&\le&
K_1\sqrt{n} + K_2= {O}(\sqrt{n}) = o(n) \nonumber
\end{eqnarray*}with the supremum taken over all probability  measures $Q$, for some finite constants $K_1, K_2=K_2(\xi)$, independent of $n$.
This completes the proof.
\end{proof}

\bigskip

\subsection{Proof of Theorem~\ref{boot}}
{
Let $\UU_n^*=\sqrt{n}(\FF_n^*-\FF_n)$ be the bootstrap counterpart of $\UU_n=\sqrt{n}(\FF_n-F)$ with marginals $\UU_{nj}^*$, $j\in\{1,\dots,d\}$, and recall that $F=C$ as the marginal distributions  $F_j$ are uniform distributions on $[0,1]$.
The proof of the bootstrap counterpart of Theorem~\ref{thm:ecp1} is similar to the proof of Theorem~\ref{thm:ecp1}, after replacing the process $\GG_n$ in the proof of Theorem~\ref{thm:GENERAL} by $\UU_n^*=\sqrt{n}(\FF_n^*-\FF_n)$. For this reason, we concentrate on the proof of the bootstrap counterpart of Theorem~\ref{theorem:ecp2}.\\

We define the empirical processes
\begin{align}
\bar \ZZ_n^*(g) =& \sqrt{n} \int g(\x)\, d (\bar \CC_n^* - \bar \CC_n)(\x)\\
=& \sqrt{n} \left( \int g( \FF_{n1}^*(x_1),\ldots,\FF_{nd}^*(x_d) )\, {\rm d} \FF_n^*(\x) - \int g(\FF_{n1}(x_1),\ldots,\FF_{nd}(x_d) ) \, {\rm d} \FF_n (\x) \right)\nonumber
\\\intertext{and}
\widetilde \ZZ_n^*(g)
=&\int \left[ g(\x)+\sum_{k=1}^d T_{k}(g)(\x) \right]\, {\rm d}{\UU}_n^*(\x)\\
=& \int g(\x)\, {\rm d}{\UU}_n^*(\x) + \sum_k \int \dot{g}_k(\x) {\UU}_{nk}^*(x_k)\, {\rm d}C(\x)\nonumber
\end{align} with $g\in\G$.
The process $\widetilde \ZZ_n^*$  has a tight Gaussian limit, by the bootstrap CLT (see, for instance, Van der Vaart \& Wellner 1996, Theorem~3.6.1)
and Lemma \ref{convergentie}. Hence it suffices to show
\begin{eqnarray}
\label{eq}
\sup_{g\in\G} | \bar\ZZ_n^*(g) -\widetilde \ZZ_n^*(g) | =o_{p^*}(1), \text{ as } n\to\infty.
\end{eqnarray}
For this, we first observe that, after rearranging terms,
\begin{eqnarray*}
&& \int g(\FF_{n1}^*(x_1),\ldots,\FF^{*}_{nd}(x_d) )\, d\FF_n^*(\x) - \int g(\FF_{n1}(x_1),\ldots,\FF_{nd}(x_d))\, {\rm d}\FF_n(\x)
\\
&&=  \int g(\FF_{n1}^*(x_1),\ldots,\FF_{nd}^*(x_d))\, {\rm d}(\FF_n^*-\FF_n) (\x)\\
&&\quad +  \int g(\FF_{n1}^*(x_1),\ldots,\FF_{nd}^*(x_d)) -  g(\FF_{n1}(x_1),\ldots,\FF_{nd}(x_d))\,{\rm d}\FF_n(\x) \\
&&=  \int  g(\x) \, d(\FF_n^*-\FF_n)(\x) +   \int  \{ g(\FF_{n1}^*(x_1),\ldots,\FF_{nd}^*(x_d))-g(\FF_{n1}(x_1),\ldots,\FF_{nd}(x_d))  \} \, {\rm d}C(\x) +\\
&&\quad  +  \int g(\FF_{n1}^*(x_1),\ldots,\FF_{nd}^*(x_d)) -  g(\FF_{n1}(x_1),\ldots,\FF_{nd}(x_d))\, {\rm d}  (\FF_n-C)(\x)\\
&&
\quad + \int \{ g(\FF_{n1}^*(x_1),\ldots,\FF_{nd}^*(x_d)) -  g(\x) \}\, {\rm d}(\FF_n^*-\FF_n)(\x)
\end{eqnarray*}
so that
%\begin{eqnarray*} &&\bar\ZZ_n^*(g)- \widetilde \ZZ_n^*(g)
%\\ &&=
 %\int \left \{ g(\FF_{n1}^*(x_1),\ldots,\FF_{nd}^*(x_d))-g(\FF_{n1}(x_1),\ldots,\FF_{nd}(x_d)) -
 % \sum_{k=1}^d \UU_{nk}^*(x_k)   \dot{g}_k(\x) \right\} \, {\rm d}C(\x)\nonumber\\
%&&\quad  +  \int \left\{  g(\FF_{n1}^*(x_1),\ldots,\FF_{nd}(x_d)) -  g(\FF_{n1}(x_1),\ldots,\FF_{nd}(x_d))\right\}\, {\rm d}  \UU_n(\x)\nonumber\\
% &&
%\quad + \int \left\{ g(\FF_{n1}^*(x_1),\ldots,\FF_{nd}^*(x_d)) -  g(\x) \right\}\, {\rm d}\UU_n^*(\x)  .\nonumber
%\end{eqnarray*}
%Consequently, we have
\begin{eqnarray} \label{id}
\sup_{g\in\G} \left| \bar\ZZ_n^*(g) - \widetilde \ZZ_n^*(g) \right| &\le& I^* + II^* + III^*
\end{eqnarray}
with
\begin{eqnarray*}
I^* &=&  \sup_{g\in\G}
\left| \int \left \{ \sqrt{n} \left[ g(\FF_{n1}^*(x_1),\ldots,\FF_{nd}^*(x_d))-g(\FF_{n1}(x_1),\ldots,\FF_{nd}(x_d)) \right] -
  \sum_{k=1}^d \UU_{nk}^*(x_k)   \dot{g}_k(\x) \right\} \, {\rm d}C(\x) \right|
\\
  II^* &=& \sup_{g\in\G}\left|  \int \{ g(\FF_{n1}^*(x_1),\ldots,\FF_{nd}^*(x_d)) -  g(\FF_{n1}(x_1),\ldots,\FF_{nd}(x_d))\} \, {\rm d}  \UU_n(\x) \right|\\
  III^* &=& \sup_{g\in\G} \left| \int \{ g(\FF_{n1}^*(x_1),\ldots,\FF_{nd}^*(x_d) ) - g(\x) \}\,{\rm d} \UU_n^* (\x) \right|
  \end{eqnarray*}
For the first term on the right in (\ref{id}), we reason as in Proposition 6 and we use that $\dot{g}_k$ is uniformly equicontinuous and both $\UU_{nk}$ and  $\UU_{nk}^*$
converge weakly. More precisely, there exists a bounded function $\phi_k$ with $\lim_{t\downarrow0}
 \phi_k(t)=0$ such that
\begin{eqnarray*}
&& \sup_{g\in\G}
\left| \int \left \{\sqrt{n}\left[ g(\FF_{n1}^*(x_1),\ldots,\FF_{nd}^*(x_d))-g(\FF_{n1}(x_1),\ldots,\FF_{nd}(x_d)) \right]-
  \sum_{k=1}^d \UU_{nk}^*(x_k)   \dot{g}_k(\x) \right\} \, {\rm d}C(\x) \right|\\
  &\le& \sum_{k=1}^d \| \UU_{nk}^*\|_\infty \phi_k( \| \FF_n^* - \FF_n\|_\infty +\| \FF_n-I\|_\infty ) = o_{p^*}(1), \text{ as } n\to\infty,
  \end{eqnarray*}
  as
\[  \phi_k(  \| \FF_n^* - \FF_n \|_\infty +  \| \FF_n - I \|_\infty ) =o_{p^*}(1), \text{ as } n\to\infty.
\]
For the second term on the right in (\ref{id}), we write
\begin{eqnarray*}
&& \sup_{g\in\G}\left|  \int \{ g(\FF_{n1}^*(x_1),\ldots,\FF_{nd}^*(x_d)) -  g(\FF_{n1}(x_1),\ldots,\FF_{nd}(x_d))\} \, {\rm d}  \UU_n(\x) \right|\\
&&\le  \sum_{k=1}^d \left| \int  \UU_{nk}^* (x_k)  \dot{g}_k(\x)  \, {\rm d}  n^{-1/2}\UU_n(\x)  \right|  + 2\sum_{k=1}^d  \|  \UU_{nk}^*\|_\infty\phi_k(
 \| \FF_n^* - \FF_n\|_\infty + \| \FF_n-I\|_\infty )   \\
 &&=\sum_{k=1}^d \left| \int  \UU_{nk}^* (x_k)  \dot{g}_k(\x)  \, {\rm d}  n^{-1/2}\UU_n(\x)  \right|   +o_{p^*}(1)
\end{eqnarray*}
for the bounded functions $\phi_k$ with $\lim_{t\downarrow 0} \phi_k(t)=0$. Moreover, for each $\eps>0$ there exists a $M=M(\eps)<\infty$ such that  the event $$\{ \| n^{-1/2} \UU_{nk}^*\|_\infty\le M/2\} \cap \{\| n^{-1/2} \UU_{nk}\|_\infty\le M/2\}$$ has probability at least $1-\eps$ for $n\to\infty$, and on this event
\begin{eqnarray*}
\sup_{g\in\G} \left | \int \UU_{nk}^*(x_k) \dot{g}_k(\x) \, {\rm d} n^{-1/2} \UU_n(\x) \right| &\le& \sup_{h\in \calH_{k,n}(M)} \left| \int h \, {\rm d} n^{-1/2} \UU_n \right|
=o_p(1)
\end{eqnarray*} as $n\to\infty$,
by the same reasoning as in Proposition 7,
with the class $\calH_{n,k}(M)$ defined in (\ref{H_n}). Note that, by the triangle inequality, $\dot{g}_k\UU_{nk}^*\in \calH_{n,k}(M)$ on the above event.\\

For the third term on the right in (\ref{id}), we can argue as for the previous  term above, now using the weak convergence of $\UU_n^*$ in lieu of $\UU_n$.
In particular, for each fixed $\eps>0$, choose $M<\infty$  for which
$$ \bigcap_{k=1}^d \left\{ |\sqrt{n}( \FF_{nk}^* -I) \|_\infty\le M\right\}, $$%$k=1,\ldots,d$,
holds with (bootstrap) probability at least $1-\eps$, as $n\to\infty$. On this event,  $\sqrt{n}( \FF_{nk}^*-I)\dot{g}_k $ belongs to $\calH_{n,k}(M)$, $k=1,\ldots,d$,
 and
\begin{eqnarray*}
&&\sup_{g\in\G} \left| \int \{ g(\FF_{n1}^*(x_1),\ldots,\FF_{nd}^*(x_d) ) - g(\x) \}\,{\rm d} \UU_n^* (\x) \right|\\
&&\le \sum_{k=1}^d \sup_{h\in \calH_n,k(M)}\left| \int h \, {\rm d} n^{-1/2} \UU_n^*\right| +2\sum_{k=1}^d  \phi_k(\| \FF_{n}^* - I\|_\infty) \| \sqrt{n} (\FF_{nk}^* -I) \|_\infty\\
&&= \sum_{k=1}^d \sup_{h\in \calH_n,k(M)}\left| \int h \, {\rm d} n^{-1/2} \UU_n^*\right| +o (1), \text{ as } n\to\infty.
\end{eqnarray*}
The proof  of Theorem~2.4.3 of Van der Vaart \& Wellner (1996) or the proof of  the uniform Glivenko-Cantelli theorem (Theorem~2.8.1 of Van der Vaart \& Wellner (1996)) show that
\[ \sup_{h\in \calH_{n,k}(M)}\left| \int h \, {\rm d} n^{-1/2} \UU_n^*\right| =o_{p^*}(1) \text{ as } n\to\infty
\]
as all functions $h\in\calH_{n,k}(M)$ are uniformly bounded and the required entropy condition is met with ease, as (\ref{eq:entropy_intermediate_1}) shows that
\[ \sup_Q \log N(\xi,\calH_{k,n}(M), L_1(Q)) = O(\sqrt{n}),
\]
with the supremum taken over all probability measures $Q$, for all $\xi>0$.\\

Hence (\ref{eq}) holds, and the theorem follows from the weak convergence of $\widetilde \ZZ_n^*$.\qed

\bigskip

\subsection{Technical results}
\label{tr}

This subsection contains technical lemmata needed for the proof of Theorem~\ref{theorem:ecp2}.

\begin{lemma}
\label{lemma:uniform_equicontinuity_function_phi}
Let   $\calF$  be the  class of  uniformly equicontinuous functions $f:[0,1]^d\to\RR$.  Then there exists a monotone  increasing function $\phi_{\calF}:\mathbb{R}^+\rightarrow\mathbb{R}^+$ such that
\begin{align}
\lim_{x\downarrow0}\phi_{\calF}(x)=0
\label{eq:phi_calG_zero_limit}
\end{align}
and
\begin{align}
 \sup_{f\in\calF}|f(\x)-f(\y)|\le\phi_\F(\|\x-\y\|) \ \text{ for all } \x,\y\in[0,1]^d.
\label{eq:phi_calG_property}
\end{align}
In addition, %if the class $\calG$ is defined on a bounded set, then
$\phi_{\calF}$ is finite valued.
\end{lemma}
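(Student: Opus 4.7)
The plan is to take the obvious candidate, namely the joint modulus of continuity of the class $\calF$, and verify the three required properties directly from the definition of uniform equicontinuity plus compactness of $[0,1]^d$.

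Concretely, I would define
\begin{equation*}
\phi_{\calF}(t) \;:=\; \sup_{f\in\calF}\, \sup_{\substack{\x,\y\in[0,1]^d \\ \|\x-\y\|\le t}} |f(\x)-f(\y)|, \qquad t>0.
\end{equation*}
Monotonicity in $t$ is immediate because enlarging $t$ enlarges the set over which the inner supremum is taken, and the bound (\ref{eq:phi_calG_property}) holds by construction of $\phi_{\calF}$, evaluated at $t=\|\x-\y\|$.

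For (\ref{eq:phi_calG_zero_limit}), I would simply translate the definition of uniform equicontinuity: given $\eps>0$, there exists $\delta=\delta(\eps)>0$ so that $\|\x-\y\|<\delta$ implies $|f(\x)-f(\y)|<\eps$ uniformly over $f\in\calF$, hence $\phi_{\calF}(\delta/2)\le \eps$, and by monotonicity $\limsup_{t\downarrow 0}\phi_{\calF}(t)\le\eps$; letting $\eps\downarrow 0$ gives the limit.

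The one step requiring a small argument is finiteness of $\phi_{\calF}$, since the lemma does not assume that the functions in $\calF$ are uniformly bounded. I would fix the equicontinuity parameter $\delta_0>0$ corresponding to $\eps=1$, so that $\|\x-\y\|<\delta_0$ forces $|f(\x)-f(\y)|<1$ uniformly in $f\in\calF$. For arbitrary $\x,\y\in[0,1]^d$, join them by the straight segment and insert points $\x=\z_0,\z_1,\ldots,\z_N=\y$ with $N=\lceil \|\x-\y\|/\delta_0\rceil$ and $\|\z_i-\z_{i-1}\|<\delta_0$; the triangle inequality then yields $|f(\x)-f(\y)|<N$. Since $[0,1]^d$ has diameter $\sqrt{d}$, one has $N\le N_0:=\lceil \sqrt{d}/\delta_0\rceil$ uniformly, so $\phi_{\calF}(t)\le N_0<\infty$ for every $t>0$. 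This is the only step that uses compactness of the domain, and it is the mild obstacle; everything else is bookkeeping on the definitions.
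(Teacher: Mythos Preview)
Your proposal is correct and matches the paper's own proof essentially line for line: the paper defines exactly the same modulus $\phi_{\calF}(a)=\sup_{\|\x-\y\|\le a}\sup_{f\in\calF}|f(\x)-f(\y)|$, reads off monotonicity, the bound, and the limit at zero from the definitions, and proves finiteness via the same chaining argument over the bounded domain. The only cosmetic difference is that you spell out the chain as equally spaced points on the straight segment, whereas the paper leaves the $\delta$-chain abstract with $k_{\x,\y}\le C/\delta$.
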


\begin{proof}
Let
\begin{align}
\phi_{\calF}(a) = \sup_{\|\x-\y\|\le a}\sup_{f\in\calF}|f(\x)-f(\y)|. \nonumber
\end{align}
Clearly, $\phi_\F$ is monotone increasing.  In addition,  (\ref{eq:phi_calG_zero_limit}) must hold, for otherwise $\calF$ is not uniformly equicontinuous.  Next, for any $\x',\y'$, we let $\delta=\|\x'-\y'\|$, and we observe that
\begin{align}
\sup_{f\in\calF}|f(\x')-f(\y')| \le \sup_{\|\x-\y\|\le\delta}\sup_{f\in\calF}|f(\x)-f(\y)|=\phi_{\calF}(\delta)=\phi_\F(\|\x'-\y'\|). \nonumber
\end{align}
It remains to show that $\phi_{\calF}$ is finite valued when $\calF$ is defined on   the bounded set $[0,1]^d$.  By  (\ref{eq:phi_calG_zero_limit}), we can choose $\delta \in(0,a]$ small enough such that $\phi_{\calF}(\delta)<\infty$.  For each pair of $\x,\y\in[0,1]^d$, we construct a $\delta$-chain
\begin{align}
\{\x=\x_{\x,\y,0},\x_{\x,\y,1},\dots,\x_{\x,\y,k_{\x,\y}}=\y\} \nonumber
\end{align}
such that
\begin{align}
\|\x_{\x,\y,i}-\x_{\x,\y,i-1}\|\le\delta \nonumber
\end{align}
and $k_{\x,\y} \le C/\delta$ for some finite constant $C$ not dependent on $\x,\y$.  Note that this choice of $k_{\x,\y}$ with the given specific bound is possible because the class $\calF$ is defined on a bounded set.  Then, by the construction of $\phi_{\calF}$, we have
\begin{align}
\phi_{\calF}(a) &= \sup_{\|\x-\y\|\le a}\sup_{f\in\calF}|f(\x)-f(\y)| \nonumber \\
&\le \sup_{\|\x-\y\|\le a}\sup_{f\in\calF} \sum_{i=1}^{k_{\x,\y}}|f(\x_{\x,\y,i})-f(\x_{\x,\y,i-1})| \nonumber \\
%%&\le \sup_{\|\x-\y\|\le a} \sum_{i=1}^{k_{\x,\y}} \sup_{f\in\calF}|f(\x_{\x,\y,i})-f(x_{\x,\y,i-1})| \nonumber \\
&\le \sup_{\|\x-\y\|\le a} \sum_{i=1}^{k_{\x,\y}} \phi(\delta) \nonumber \\
%%&= \sup_{\|\x-\y\|\le a} k_{\x,\y} \phi(\delta) \nonumber \\
&\le (C/\delta) \phi(\delta)
<\infty. \nonumber
\end{align}
\end{proof}

\bigskip
\begin{lemma}
\label{lemma:entropy_uniformly_equicontinuity_calF_k}
The class $\F$ of uniformly bounded and uniformly equicontinuous functions $f:[0,1]^d \to[0,1]$ is totally bounded in $L_\infty([0,1]^d)$.
\end{lemma}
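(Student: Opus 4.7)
The plan is a standard Arzel\`a--Ascoli-type grid discretization; the only ingredient beyond the purely classical argument is to invoke the preceding Lemma~\ref{lemma:uniform_equicontinuity_function_phi} to secure a single modulus of continuity that works uniformly over $\F$. Concretely, I would first apply that lemma to $\F$ to produce a finite-valued, monotone function $\phi_\F$ with $\phi_\F(t)\to 0$ as $t\downarrow 0$ such that $\sup_{f\in\F}|f(\x)-f(\y)|\le \phi_\F(\|\x-\y\|)$ for all $\x,\y\in[0,1]^d$. This common modulus is exactly what makes it possible to approximate every $f\in\F$ by values on a single finite grid, with a single error estimate.

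Given $\eps>0$, I would next choose $\delta>0$ with $\phi_\F(\delta)\le\eps/3$ and partition $[0,1]^d$ into finitely many disjoint sub-cubes $Q_1,\ldots,Q_N$ each of diameter at most $\delta$, picking an arbitrary representative point $\x_i\in Q_i$ in each. Since the codomain $[0,1]$ is itself totally bounded in $\RR$, I would also fix an $(\eps/3)$-net $\{v_1,\ldots,v_M\}\subset[0,1]$. For every map $\tau:\{1,\ldots,N\}\to\{1,\ldots,M\}$, of which there are only $M^N<\infty$, define the simple step function
\[
g_\tau(\x)=\sum_{i=1}^N v_{\tau(i)}\,\mathbf{1}\{\x\in Q_i\},\qquad \x\in[0,1]^d.
\]
The family $\{g_\tau\}$ is a finite subset of $L_\infty([0,1]^d)$ and is the candidate $\eps$-cover.

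To verify that it is indeed an $\eps$-net, given any $f\in\F$, I would define $\tau_f$ by choosing, for each $i$, an index $\tau_f(i)$ with $|f(\x_i)-v_{\tau_f(i)}|\le \eps/3$, which is possible by the net property. For any $\x\in Q_i$ the triangle inequality together with the uniform modulus of continuity yields
\[
|f(\x)-g_{\tau_f}(\x)|\le |f(\x)-f(\x_i)|+|f(\x_i)-v_{\tau_f(i)}|\le \phi_\F(\delta)+\eps/3 < \eps,
\]
so $\|f-g_{\tau_f}\|_{L_\infty}<\eps$. Since $\eps>0$ was arbitrary, $\F$ is totally bounded in $L_\infty([0,1]^d)$.

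I do not expect any substantive obstacle: essentially all of the real content has already been absorbed into Lemma~\ref{lemma:uniform_equicontinuity_function_phi}, which promotes the defining uniform equicontinuity of $\F$ into a single finite-valued modulus that controls the whole class simultaneously. Once that modulus is in hand, the remainder is a textbook two-step discretization---partition the compact domain $[0,1]^d$ finely enough that $\phi_\F$ resolves $\eps$, then quantize the bounded codomain $[0,1]$ to $\eps$-accuracy.
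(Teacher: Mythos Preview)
Your proposal is correct and follows essentially the same route as the paper: both proofs invoke Lemma~\ref{lemma:uniform_equicontinuity_function_phi} to obtain a single modulus of continuity $\phi_\F$ valid for the whole class, then discretize the compact domain $[0,1]^d$ on a $\delta$-grid with $\phi_\F(\delta)$ small and quantize the bounded codomain to produce a finite $\eps$-net of step functions. Your version is in fact more explicit than the paper's, which merely states that ``it is easy to see'' that finitely many approximating functions exist once the grid is fixed.
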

\begin{proof}
Since there exists a finite, monotone increasing  function $\phi$ with $\lim_{t\downarrow0}\phi(t)=0$  such that
$\sup_{f\in\F}  |f(\x)-f(\y)| <\phi(\|\x-\y\|)$ for all $\x,\y\in[0,1]^d$ by Lemma 8, and since the domain $[0,1]^d$  is bounded, we can construct,  for each $\eps>0$,  a regular $\delta$-grid of $[0,1]^d$  with
$\delta=\inf\{ t>0:\  \phi(t\sqrt{d} )\ge \eps/2\}$  strictly positive. Since $\F$ is uniformly bounded, it is easy to see that using this finite grid with $\delta$ given above, there are  finitely many functions  $g_1,\ldots,g_M$ such that
$ \min_{1\le k\le M}  \sup_{f\in\F}  \|f-g_k\|_\infty<\varepsilon $.
(Using the line of reasoning as in the proof of Lemma 2.3 in Van de Geer (2000), we can actually improve this crude bound).
\end{proof}

\bigskip

\begin{lemma}
\label{mono}
 The class $\F$ of monotone functions $f:\RR\to[0,1]$ satisfies
 \[ \log N(\eps,\F,L_r(Q))\le K \frac 1\eps
 \]
 for all $\eps>0$, all probability measures $Q$ and all $r\ge1$.
 \end{lemma}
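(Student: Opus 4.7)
The plan is to prove the stronger bracketing bound $\log N_{[]}(\eps,\F,L_r(Q)) \le K/\eps$ (with the constant $K=K(r)$ allowed to depend on $r$), from which the stated covering-number bound follows via $N \le N_{[]}$. The approach, patterned on Birman--Solomjak and van~der~Vaart~\&~Wellner~(1996, Theorem~2.7.5), is to exploit the fact that a monotone $f:\RR\to[0,1]$ is essentially determined by its values on a coarse $Q$-quantile grid, since monotonicity prevents oscillation between grid points.

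Fix a positive integer $k$ to be chosen, and pick $Q$-quantile points $-\infty = t_0 < t_1 < \cdots < t_k = \infty$ with $Q((t_{i-1},t_i]) \le 1/k$. For each non-decreasing integer sequence $\a = (a_0,\ldots,a_k) \in \{0,1,\ldots,k\}^{k+1}$, I would define piecewise-constant brackets
\[
l_\a(x) = \frac{a_{i-1}}{k}, \qquad u_\a(x) = \min\!\left(\frac{a_i+1}{k},\, 1\right), \qquad x\in(t_{i-1},t_i].
\]
Given any $f\in\F$, choosing $a_i = \lfloor k f(t_i)\rfloor$ yields $l_\a(x) = a_{i-1}/k \le f(t_{i-1}) \le f(x) \le f(t_i) < (a_i+1)/k = u_\a(x)$ on $(t_{i-1},t_i]$ by monotonicity, so every $f\in\F$ is bracketed by some $(l_\a,u_\a)$. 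The number of such sequences is $\binom{2k+1}{k+1} \le 4^{k+1}$, so the bracketing entropy is at most $O(k)$.

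For the bracket size, since $u_\a - l_\a \le 1$ pointwise, $(u-l)^r \le u-l$, and therefore
\[
\|u_\a - l_\a\|_{L_r(Q)}^r \le \|u_\a - l_\a\|_{L_1(Q)} \le \sum_{i=1}^k \frac{a_i - a_{i-1} + 1}{k}\cdot\frac{1}{k} = \frac{a_k - a_0 + k}{k^2} \le \frac{2}{k}.
\]
Taking $k = \lceil 2/\eps\rceil$ (for the $r=1$ case, which is the version invoked in~(\ref{eq:entropy_intermediate_1})) yields an $L_1$-bracket size $\le \eps$, and hence $\log N_{[]}(\eps,\F,L_1(Q)) \le (k+1)\log 4 \le K/\eps$. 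For general $r>1$, the same scheme with $k = \lceil 2/\eps^r\rceil$ gives $K/\eps^r$; the sharper $K(r)/\eps$ form stated in the lemma can be obtained by a refinement in which the range partition $\{j/k\}$ is paired with a finer, $f$-adapted domain partition. The one real subtlety---and the main obstacle---is verifying the bracketing property when $f$ has jumps strictly between consecutive quantile points $t_{i-1},t_i$, where naive discretization of the sequence $a_i$ could fail to trap $f$; this is addressed by the adaptive choice $a_i = \lfloor k f(t_i)\rfloor$, which plugs monotonicity directly into the sandwich bound above and makes the argument robust to discontinuities of $f$.
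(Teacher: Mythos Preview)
The paper does not prove this lemma at all; it simply cites Theorem~2.7.5 of Van~der~Vaart~\&~Wellner~(1996). Your proposal goes further and sketches the actual argument behind that reference, so in that sense it is more informative than the paper's one-line citation while following the same underlying approach.

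Your $r=1$ argument is correct and complete, and this is the only case actually invoked in the paper (in display~(\ref{eq:entropy_intermediate_1}), where $L_1(Q)$ appears). For general $r>1$ your naive bracket-size estimate yields only $K/\eps^r$, not $K/\eps$; you correctly flag that the sharper rate requires the refined construction in Van~der~Vaart~\&~Wellner. Two small technical points worth noting: your bracket construction tacitly assumes $f$ is non-decreasing (the non-increasing case follows by replacing $f$ with $1-f$), and the quantile partition with $Q((t_{i-1},t_i])\le 1/k$ can fail when $Q$ has atoms of mass exceeding $1/k$; the standard remedy is to isolate each such atom as a degenerate interval, on which the bracket width is at most $1/k$ from the range discretization alone.
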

 \begin{proof} See Theorem~2.7.5 of Van der Vaart \& Wellner (1996).
 \end{proof}

 \bigskip

\appendix

\section{Integration by parts}

In this section we present a general yet simple integration by parts formula in arbitrary dimensions.  The integration by parts formula in one-dimension is well-known; see for instance, Saks (1937, Theorem~14.1 in Chapter~3).  However, its multivariate extension does not appear to be adequately addressed.  To the best of our knowledge, the only general integration by parts formula in arbitrary dimensions is Proposition A.1 of Fermanian (1998, page 149).  However, this formula is still somewhat complicated; for instance, the resulting integrand is expressed in terms of the measures defined from the functions, rather than the functions themselves.  As we will see, we will derive a significantly simplified and essentially optimal formula.  Recently, Theorem~A.6 and Corollary~A.7 of Berghaus et al. (2014) derive a simple bivariate integration by parts formula; however, their proof is quite specific to the case $d=2$ and does not appear to be easily generalizable to higher dimensions.  We will demonstrate how we recover Corollary~A.7 of Berghaus et al. (2014) from our general formula.  We refer the readers to Section~\ref{sec:notations} for notations, which in turn follow Owen~(2005).  To obtain a more general formula we consider the domain $[\a,\b]$ for $\a<\b$ instead of the unit hypercube $[{\bf 0},\1]$ on which we focused for the remainder of the paper.

%We use the notation of Owen (2005). We recall that the symbol $\x_I:\y_{-I}$ represents a point ${\bf z}\in\RR^d$ with $z_j=x_j$ for $j\in I\subset\{1,\ldots,d\}$ and $z_j=y_j$ for $j\in -I=\{1,\ldots, d\}-I$.\\

\begin{theorem}[Integration by parts]\label{IBP-main}
We let $\a,\b\in\mathbb{R}^d$, and functions $f, g: [\a,\b]\rightarrow\mathbb{R}$.  We assume that $f, g$ both satisfy assumption~F with $[0,1]^d$ replaced by $[\a,\b]$.  Then we have the integration by parts formula:
\begin{align}
&\int_{(\a,\b]} f(\x) {\rm d} g(\x) \nonumber \\
&= \sum_{ \substack{I_1,I_2,I_3\subset\{1,\dots,d\}:\\ I_1+I_2+I_3 = \{1,\dots,d \}} } (-1)^{|I_1|+| I_2|} \int_{(\a_{I_1},\b_{I_1}]} g(\x_{I_1}-;\a_{I_2}:\b_{I_3}) \, \d f(\x_{I_1};\a_{I_2}:\b_{I_3}).
\label{eq:IBP}
\end{align}
In the above the $+$ symbol within $I_1+I_2+I_3$ denotes the disjoint union, so the summation is taken over all partitions of the set $\{1,\ldots,d\}$ into the sets $I_1, I_2, I_3$.
\end{theorem}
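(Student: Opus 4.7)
The plan is to prove the identity by induction on the dimension $d$.

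For the base case $d=1$, the formula reduces to the classical Lebesgue--Stieltjes integration by parts formula for right-continuous functions of bounded variation on an interval (see e.g.\ Saks (1937, Ch.~3, Thm.~14.1)). One checks by inspection that the three nonempty partitions of $\{1\}$ match the three terms of the classical identity: $I_1=\{1\}$ yields $-\int_{(a,b]} g(x-)\,\d f(x)$, while $I_2=\{1\}$ and $I_3=\{1\}$ yield $-f(a)g(a)$ and $f(b)g(b)$ respectively, using the convention that an integral over an empty index set $I_1=\emptyset$ collapses to point evaluation weighted by the single atom of the zero-dimensional $\d f(\cdot;\a_{I_2}:\b_{I_3})$.

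For the inductive step, assume the formula holds in dimensions $1,\dots,d-1$ and let $d\ge 2$. I would distinguish the last coordinate and apply a Fubini-type decomposition of the signed Borel measure $\Delta_g$: using the Aistleitner--Dick characterization, the integral $\int_{(\a,\b]} f\,\d g$ can be rewritten as an iterated integral in which the inner integration is over the $(d-1)$-dimensional slice $(\a_{-\{d\}},\b_{-\{d\}}]$ against a slice measure and the outer integration is a one-dimensional Stieltjes integral in the coordinate $x_d$ against a bounded-variation function obtained from $g$ by the standard inclusion--exclusion over the remaining coordinates evaluated at $\a$ or $\b$. Applying the classical one-dimensional IBP in the $x_d$-direction produces three groups of terms, corresponding to whether the index $d$ will ultimately belong to $I_1$, $I_2$, or $I_3$: a transferred integral $-\int_{(a_d,b_d]}(\cdots)(x_d-)\,\d(\cdots)(x_d)$, a boundary term at $a_d$ with sign $-1$, and a boundary term at $b_d$ with sign $+1$. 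Each of the three resulting expressions is a $(d-1)$-dimensional Hardy--Krause integral of a projected function against the differential of another projected function, to which the inductive hypothesis applies verbatim after fixing the $d$-th coordinate to the appropriate value.

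Substituting the $(d-1)$-dimensional expansions into each of the three groups produces a sum over partitions of $\{1,\dots,d-1\}$ into three blocks; attaching the index $d$ to the appropriate one of the three blocks (as dictated by which group of terms we are in) recovers a sum over partitions of $\{1,\dots,d\}$. The signs multiply correctly: each coordinate placed in $I_1$ or $I_2$ contributes a factor of $-1$ (from either the transferred integral or the $\a$-boundary term), while each coordinate placed in $I_3$ contributes $+1$, yielding the overall factor $(-1)^{|I_1|+|I_2|}$. Moreover, the left-limit ``$-$'' notation in $g(\x_{I_1}-;\a_{I_2}:\b_{I_3})$ is produced jointly by the $d$ one-dimensional left-limits accumulated from the successive IBP applications across the coordinates in $I_1$.

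The main obstacle is the careful bookkeeping of signs and the left-limit notation across the induction, and in particular justifying the Fubini-style decomposition of $\Delta_g$ along the singled-out coordinate at the level of Hardy--Krause measures; the cleanest route is to use (\ref{def:measure_projection}) to identify the iterated-integral slice measures directly with lower-dimensional projections of $g$, which ensures that applying the inductive hypothesis does not require any regularity beyond assumption~F. A minor secondary task is to verify that, in the boundary contributions where $I_1 \neq \{1,\ldots,d\}$, the convention for an integral of $g(\x_{I_1}-;\a_{I_2}:\b_{I_3})$ over a lower-dimensional rectangle agrees with the iterated construction; this follows from the uniqueness of the Aistleitner--Dick measure associated to each projection of $g$.
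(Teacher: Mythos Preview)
Your inductive strategy is different from the paper's, which is \emph{not} inductive: the paper expresses $f(\x)$ via Owen's anchored inclusion--exclusion formula $f(\x)=\sum_{I}(-1)^{|I|}f((\x_I,\b_I];\b_{-I})$, inserts this into $\int f\,\d g$, and then applies Fubini to swap the $\d g(\x)$ and $\d f(\y_I;\b_{-I})$ integrals. This Fubini is the legitimate one for two \emph{independent} signed measures on separate copies of the cube (one coming from $f$, the other from $g$); no disintegration of $\Delta_g$ along a coordinate is needed. After that swap the integrand is $g((\a_I,\y_I)\times(\a_{-I},\b_{-I}])$, which is expanded by (\ref{def:measure}) and then collapsed by an explicit re-indexing and a sign-cancellation argument.

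The gap in your plan is the ``Fubini-type decomposition of $\Delta_g$ along the singled-out coordinate.'' You describe the outer integrator as the one-dimensional function $G(z)=\sum_{J\subset\{1,\dots,d-1\}}(-1)^{|J|}g(\a_J{:}\b_{-J-\{d\}},z)$ and the inner integral as $\int f(\y,z)\,K_z(\d\y)$ against a ``slice measure'' $K_z$. That is indeed abstract disintegration of $\Delta_g$ over its $z$-marginal $\d G$, but the kernel $K_z$ is \emph{not} the measure $\Delta_{g(\cdot;z)}$ associated to the projection $g(\cdot;z)$; the two are related by $\d_z\Delta_{g(\cdot;z)}=K_z\,\d G(z)$ only in the sense of a Radon--Nikodym identity, and $K_z$ has no pointwise description in terms of $g$. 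Consequently, after applying one-dimensional integration by parts in $z$ to $\int F(z)\,\d G(z)$ with $F(z)=\int f(\y,z)\,K_z(\d\y)$, the resulting term $\int G(z-)\,\d F(z)$ involves $\d F$, which depends on the abstract kernel and cannot be matched to a $(d-1)$-dimensional integral of the form $\int f(\cdot;z)\,\d g(\cdot;z)$ to which the inductive hypothesis would apply. (A quick sanity check: the alternative candidate formula $\int f\,\d g=\int_{(a_d,b_d]}\d_z\bigl[\int f(\y,z)\,\d_\y g(\y;z)\bigr]$, i.e.\ reading the outer $\d_z$ as a total increment, already fails for $f=\mathbf{1}_{R\times(c,d]}$ with $(c,d]\subsetneq(a_d,b_d]$.) The paper in fact notes that the only prior direct low-dimensional proof, Berghaus et al.\ (2014) for $d=2$, is ``quite specific to the case $d=2$ and does not appear to be easily generalizable,'' which is a symptom of exactly this obstruction.

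If you want to salvage an inductive argument, the natural route is to mimic the paper's first step in one variable only: write $f(\y,z)=f(\y,b_d)-\int_{(z,b_d]}\d_\zeta f(\y,\zeta)$, insert this into $\int f\,\d g$, and swap the $\d g$ and $\d_\zeta f$ integrals. This puts the $z$-integration against $\d g$ on the inside, where it collapses to $\Delta_{g(\cdot;\zeta-)}-\Delta_{g(\cdot;a_d)}$ evaluated on $\y$-rectangles, and only then does the $(d-1)$-dimensional hypothesis become applicable. But at that point you are essentially reproducing the paper's Owen-formula argument one coordinate at a time, not doing the ``1D IBP in $x_d$ then IH in $\x_{-\{d\}}$'' scheme you outlined.
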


\subsection*{Remarks.}
\begin{itemize}
\item
If either $f$ or $g$ is continuous, then we can replace $\x_{I_1}-$ in (\ref{eq:IBP}) by $\x_{I_1}$.  \\

\item
In Theorem~\ref{IBP-main} we used the convention that if $I_1=\emptyset$, then $$\int_{(\a_{I_1},\b_{I_1}]} g(\x_{I_1}-;\a_{I_2}:\b_{I_3}) \d f(\x_{I_1};\a_{I_2}:\b_{I_3}) = g(\a_{I_2}:\b_{I_3}) f(\a_{I_2}:\b_{I_3}) = (fg)(\a_{I_2}:\b_{I_3}),$$
that is, when there is no integration variable, the integration operation simply disappears.\footnote{In Owen (2005), the constant $f(c)$ on the right hand side of Equation~(13) is alternatively written as $\Delta_{u=\emptyset}(f;x,c)$, which appears in the first line of the equation array in the proof that follows and which resembles a measure.  We follow suit and pretend that the constant $f(c)$ in fact gives rise to a measure.}  This convention allows for a more condensed expression, and we will use the same convention below.
Alternatively, to avoid invoking this convention, (\ref{eq:IBP}) can be equivalently expressed as
\begin{align}
&\int_{(\a,\b]} f(\x)\, \d g(\x) \nonumber \\
&= \sum_{ \substack{I_1,I_2,I_3\subset\{1,\dots,d\}:\\ I_1\neq\emptyset,I_1+I_2+I_3 = \{1,\dots,d\}} } (-1)^{|I_1|+|I_2|} \int_{(\a_{I_1},\b_{I_1}]} g(\x_{I_1}-;\a_{I_2}:\b_{I_3}) \, \d f(\x_{I_1};\a_{I_2}:\b_{I_3}) \nonumber \\
&\ + \sum_{ I\subset\{1,\dots,d\} } (-1)^{|I |} (fg)(\a_{I}:\b_{-I}).
\label{eq:IBP_alt}
\end{align}
Note that the summation in the last line of (\ref{eq:IBP_alt}) is just $(fg)((\a,\b])$, the weight of the cube $(\a,\b]$ assigned by the measure defined from the product $(fg)$.

\item
For $d=1$, the formula reduces to
\begin{eqnarray*}
\int_{(a,b]} f(x) \d g(x)  &=& - \int_{(a,b]} g(x-) \d f(x) + \left[ (fg)(b) - (fg)(a) \right] \\
&=&  - \int_{(a,b]} g(x) \d f(x) + \left[ (fg)(b) - (fg)(a) \right] +  \int_{(a,b]}\{ g(x)-g(x-) \} \d f(x).
\end{eqnarray*}
The second term on the right is the generalized  length of the interval $(a,b] $, based on the right-continuous function $(fg)$, and the third term equals $\sum_{x} \{ g(x)-g(x-) \} \{ f(x)-f(x-) \} $ with the summation taken  over all (countable) common points $x$ of  discontinuity of $f$ and $g$. This term vanishes  if either $f$ or $g$ is continuous.
\item
Note that no further simplification of (\ref{eq:IBP}) is possible, as clearly all terms in the sum, now expressed directly in terms of the functions $f$ and $g$ (instead of the measures they define), are distinct.  In contrast, Proposition~A.1 of Fermanian (1998) still contains duplicates that are hidden in the measures that appear as integrands.  In $d$ dimensions, there are a total of $3^d$ terms in the sum of (\ref{eq:IBP}), because each of the $d$ coordinates belongs to exactly one of the sets $I_1, I_2, I_3$.
If $d=2$ and if either $f$ or $g$ is continuous, these $3^2=9$ terms exactly correspond to Corollary~A.7 of Berghaus et al. (2014), as we demonstrate now.  In this case, (\ref{eq:IBP_alt}) (which is equivalent to (\ref{eq:IBP}) with our convention) claims
\begin{align}
&\int_{(\a,\b]} f(\x) \d g(\x) \nonumber \\
&= \sum_{ \substack{I_1,I_2,I_3\subset\{1,2\}:\\ I_1\neq\emptyset, I_1+I_2+I_3 = \{1,2\}} } (-1)^{|I_1|+|I_2|} \int_{(\a_{I_1},\b_{I_1}]} g(\x_{I_1};\a_{I_2}:\b_{I_3}) \d f(\x_{I_1};\a_{I_2}:\b_{I_3}) \nonumber \\
&\ + \sum_{ I\subset\{1,2\} } (-1)^{|I |} (fg)(\a_{I}:\b_{-I}).
\label{eq:IBP_alt_d_2}
\end{align}

We work out the terms on the right hand side of (\ref{eq:IBP_alt_d_2}) one by one.  We start from the second line on the right hand side (\ref{eq:IBP_alt_d_2}).  For $I=\emptyset$, we have
\begin{align}
(-1)^{0} (fg)(\a_{\emptyset}:\b_{\{1,2\}}) = (fg)(b_1,b_2). \nonumber
\end{align}
For $I=\{1\}$, we have
\begin{align}
(-1)^{1} (fg)(\a_{\{1\}}:\b_{\{2\}}) = -(fg)(a_1,b_2). \nonumber
\end{align}
For $I=\{2\}$, we have
\begin{align}
(-1)^{1} (fg)(\a_{\{2\}}:\b_{\{1\}}) = -(fg)(b_1,a_2). \nonumber
\end{align}
For $I=\{1,2\}$, we have
\begin{align}
(-1)^{2} (fg)(\a_{\{1,2\}}:\b_{\emptyset}) = (fg)(a_1,a_2). \nonumber
\end{align}

We now switch to the first line on the right hand side (\ref{eq:IBP_alt_d_2}).  For $I_1=\{1\}$, $I_2=\emptyset$, and so $I_3=\{2\}$, we have
\begin{align}
 (-1)^{1+0} \int_{(\a_{\{1\}},\b_{\{1\}}]} g(\x_{\{1\}};\b_{\{2\}}) \d f(\x_{\{1\}};\b_{\{2\}}) = - \int_{(a_1,b_1]} g(x_1,b_2) f(\d x_1,b_2). \nonumber
\end{align}
For $I_1=\{1\}$, $I_2=\{2\}$, and so $I_3=\emptyset$, we have
\begin{align}
(-1)^{1+1} \int_{(\a_{\{1\}},\b_{\{1\}}]} g(\x_{\{1\}};\a_{\{2\}}) \d f(\x_{\{1\}};\a_{\{2\}}) = \int_{(a_1,b_1]} g(x_1,a_2) f(\d x_1,a_2). \nonumber
\end{align}
For $I_1=\{2\}$, $I_2=\emptyset$, and so $I_3=\{1\}$, we have
\begin{align}
 (-1)^{1+0} \int_{(\a_{\{2\}},\b_{\{2\}}]} g(\x_{\{2\}};\b_{\{1\}}) \d f(\x_{\{2\}};\b_{\{1\}}) = - \int_{(a_2,b_2]} g(b_1,x_2) f(b_1,\d x_2). \nonumber
\end{align}
For $I_1=\{2\}$, $I_2=\{1\}$, and so $I_3=\emptyset$, we have
\begin{align}
(-1)^{1+1} \int_{(\a_{\{2\}},\b_{\{2\}}]} g(\x_{\{2\}};\a_{\{1\}}) \d f(\x_{\{2\}};\a_{\{1\}}) = \int_{(a_2,b_2]} g(a_1,x_2) f(a_1,\d x_2). \nonumber
\end{align}

Finally, for $I_1=\{1,2\}$, $I_2=\emptyset$, and so $I_3=\emptyset$, we have
\begin{align}
(-1)^{2+0} \int_{(\a_{\{1,2\}},\b_{\{1,2\}}]} g(\x_{\{1,2\}}) \d f(\x_{\{1,2\}}) = \int_{(\a,\b]} g(x_1,x_2) \d f(x_1,x_2) = \int_{(\a,\b]} g \d f. \nonumber
\end{align}

Thus in the end, after collecting all terms and replacing the dummy integration variables $x_1$ and $x_2$ by $u$ and $v$, we have
\begin{align}
&\int_{(\a,\b]} f(\x) \d g(\x) \nonumber \\
&= \int_{(\a,\b]} g \d f + (fg)(b_1,b_2) - (fg)(a_1,b_2) - (fg)(b_1,a_2) + (fg)(a_1,a_2) \nonumber \\
& - \int_{(a_1,b_1]} g(u,b_2) f(\d u,b_2) + \int_{(a_1,b_1]} g(u,a_2) f(\d u,a_2) \nonumber \\
& - \int_{(a_2,b_2]} g(b_1,v) f(b_1,\d v) + \int_{(a_2,b_2]} g(a_1, v) f(a_1,\d v). \nonumber
\end{align}
The terms in the above equation are arranged as and correspond exactly to the terms in Corollary~A.7 of Berghaus et al (2014) (note that their $\Delta$ term is exactly the sum of the second to the fifth terms on the right hand side of the above equation).
%(after replacing $\a,\b$ by $\c,{\bf d}$)

\item
If $g$ is the empirical copula process so $[\a,\b]=[0,1]^d$, then (\ref{eq:IBP}) can be further simplified by setting $I_2=\emptyset$, as the function $g$ becomes zero as soon as some of its argument becomes zero. This leads to formula (\ref{IBPformula}) in Proposition \ref{IBP!}.
\end{itemize}

\begin{proof}[Proof of Theorem~\ref{IBP-main}]
We recall how the functions $f$ and $g$, and the lower-dimensional projections of the latter, are identified with measures satisfying (\ref{def:measure}) and (\ref{def:measure_projection}).
%For arbitrary $I \subset\{1,\ldots.d\}$, arbitrary $\a, \b\in\mathbb{R}^I$, and arbitrary function $f: [\a,\b]\rightarrow\mathbb{R}$, we will also use $f$ to denote the measure defined from the function $f$  in the usual way, e.g., if $\a,\b\in\mathbb{R}^I$ then
%$$f((\a,\b])=\Delta_f((\a,\b])=\sum_{I'\subset I}(-1)^{|I'|}f(\a_{I'}:\b_{-I'}).$$
% (We use the Owen notation that, $\{1,\ldots,d\}$ alone means the set $\{1,\dots,p\}$, and for $u\subset \{1,\ldots,d\}$, $-u$ alone means the set $\{1,\ldots,d\}-u$.)
Using Equation~(13) (or more precisely, using the left hand side of the first line of the equation array in the proof that follows) of Owen (2005) (see also Lemma~A.2 of Fermanian~(1998)), we have
\begin{align}
f(\x) = \sum_{I\subset{\{1,\ldots,d\}}}(-1)^{|I|} f((\x_I,\b_I];\b_{-I}). \nonumber
\end{align}
Plugging this into $\int_{(\a,\b]} f(\x) \, \d g(\x)$, we have
\begin{align}
\int_{(\a,\b]} f(\x)\, \d g(\x) &= \sum_{I \subset\{1,\ldots,d\} }(-1)^{|I|} \left[ \int_{(\a,\b]} f((\x_I,\b_I];\b_{-I}) \, \d g(\x) \right] \nonumber \\
&= \sum_{I\subset\{1,\ldots,d\} }(-1)^{|I|} \left[ \int_{(\a,\b]} \int_{(\a_I,\b_I]} \1\left\{ \x_I <\y_I\le \b_I \right\} \, \d f(\y_I;\b_{-I}) \, \d g(\x) \right]. \nonumber
\end{align}
Applying Fubini's theorem, we obtain
\begin{align}
&\int_{(\a,\b]} f(\x) \, \d g(\x) \nonumber \\
&= \sum_{I\in\{1,\ldots,d\}}(-1)^{|I|} \left[ \int_{(\a_I,\b_I]} \int_{(\a,\b]} \1\left\{ \a_I<\x_I<\y_I \right\} \, \d g(\x)\, \d f(\y_I;\b_{-I}) \right] \nonumber \\
&= \sum_{I\subset\{1,\ldots,d\}}(-1)^{|I|} \left[ \int_{(\a_I,\b_I]} g((\a_I,\y_I)\times(\a_{-I},\b_{-I}]) \, \d f(\y_I;\b_{-I}) \right].
\label{eq:IBP-main_int}
\end{align}
Up to this point, we have essentially derived a variant of Proposition~A.1 of Fermanian (1998).  However, (\ref{eq:IBP-main_int}) can be significantly simplified.  Continuing from (\ref{eq:IBP-main_int}), we have
\begin{align}
&\int_{(\a,\b]} f(\x) \, \d g(\x) \nonumber \\
&= \sum_{I\subset\{1,\ldots,d\}}(-1)^{|I|} \left[ \int_{(\a_I,\b_I]} \lim_{\x_I\uparrow\y_I-} g((\a_I,\x_I]\times(\a_{-I},\b_{-I}])\, \d f(\y_I;\b_{-I}) \right] \nonumber \\
&= \sum_{I\subset\{1,\ldots,d\}}(-1)^{|I|} \left[ \int_{(\a_I,\b_I]} \lim_{\x_I\uparrow\y_I-} \sum_{I_1\subset I}\sum_{I_2\subset-I} (-1)^{|I_1|+|I_2|} g(\x_{I-I_1}:\a_{I_1+I_2}:\b_{-I-I_2}) \, \d f(\y_I;\b_{-I}) \right] \nonumber \\
&= \sum_{I\subset\{1,\ldots,d\}}\sum_{I_1\subset I}\sum_{I_2\subset-I}(-1)^{|I|+|I_1|+|I_2|} \left[ \int_{(\a_I,\b_I]} g(\y_{I-I_1}-:\a_{I_1};\a_{I_2}:\b_{-I-I_2}) \, \d f(\y_I;\b_{-I}) \right].
\label{eq:IBP_int_2}
\end{align}
The term in the square bracket in the last line of (\ref{eq:IBP_int_2}) can be further simplified as
\begin{align}
\sum_{I_3\subset I_1}(-1)^{|I_3|} \int_{(\a_{I-I_1},\b_{I-I_1}]} g(\y_{I-I_1}-;\a_{I_1+I_2}:\b_{-I-I_2}) \, \d f(\y_{I-I_1};\a_{I_3}:\b_{-I+I_1-I_3}).
\label{eq:step_fun}
\end{align}
This claim is easily verified  for step functions $g$ (in $\y_{I-I_1}$) and the general case follows by approximating $g$ by a sum of step functions.
After replacing the term in the square bracket in the last line of (\ref{eq:IBP_int_2}) by  (\ref{eq:step_fun}), we  obtain
\begin{align}
 \int_{(\a,\b]} f(\x)\, \d  g(\x) &=  \sum_{I\subset\{1,\ldots,d\}}\sum_{I_1\subset I}\sum_{I_2\subset-I}\sum_{I_3\subset I_1} (-1)^{|I|+|I_1|+|I_2|+|I_3|}\nonumber\\
 & \left[ \int_{(\a_{I-I_1},\b_{I-I_1}]} g(\x_{I-I_1}-;\a_{I_1+I_2}:\b_{-I-I_2}) \, \d f(\x_{I-I_1};\a_{I_3}:\b_{-I+I_1-I_3}) \right].
\label{eq:IBP_int_3}
\end{align}
We now simplify the summation.  We let $u_1=I-I_1,u_2=I_3,u_3=-I-I_2,u_4=I_2,u_5=I_1-I_3$, so $u_1+u_2+u_3+u_4+u_5=\{1,\ldots,d\}$. The summation over ${I\subset\{1,\ldots,d\}}$, ${I_1\subset I}$, ${I_2\subset-I}$, ${I_3\subset I_1}$    becomes a summation over ${u_1,u_2,u_3,u_4,u_5}$ with ${ u_1+u_2+u_3+u_4+u_5=\{1,\ldots,d\}},$ and
$(-1)^{|I|+|I_1|+|I_2|+|I_3|}$ becomes $(-1)^{|u_1|+3|u_2|+|u_4|+2|u_5|}=(-1)^{|u_1|+|u_2|+|u_4|}.$  Finally, (\ref{eq:IBP_int_3}) becomes
\begin{align}
\int_{(\a,\b]} f(\x) \, \d g(\x) &= \sum_{ \substack{u_1,u_2,u_3,u_4,u_5\subset\{1,\dots,d\}:\\u_1+u_2+u_3+u_4+u_5=\{1,\dots,d\}} } (-1)^{|u_1|+|u_2|+|u_4|} \nonumber \\
&\left[ \int_{(\a_{u_1},\b_{u_1}]} g(\x_{u_1}-;\a_{u_2+u_4+u_5}:\b_{u_3})\, \d f(\x_{u_1};\a_{u_2}:\b_{u_3+u_4+u_5}) \right].
\label{eq:IBP_int_4}
\end{align}
Note that in the square bracket, $u_4$ and $u_5$ always appear together as the union $u_4+u_5$, and so the term in the square bracket is uniquely determined by $u_1$, $u_2$, $u_3$ (in which case $u_4+u_5$ is also determined).  Now we evaluate, for given $u_1$, $u_2$, $u_3$, the coefficient  \begin{align}
\sum_{ \substack{u_4,u_5\subset\{1,\dots,d\}:\\u_4+u_5=\{1,\dots,d\}-u_1-u_2-u_3} } (-1)^{|u_1|+|u_2|+|u_4|} = (-1)^{|u_1|+|u_2|} \sum_{ \substack{u_4,u_5\subset\{1,\dots,d\}:\\u_4+u_5=\{1,\dots,d\}-u_1-u_2-u_3} } (-1)^{|u_4|}. \nonumber
\end{align}
One moment's thought reveals that the summation on the right hand side of the above equality is zero, unless $u_4+u_5=\emptyset$, in which case it is one.\footnote{If $u_4+u_5\neq\emptyset$ and $|u_4+u_5|$ is odd, for each $u\subset u_4+u_5$, the term $(-1)^{|u_4|}$ with $u_4=u$ cancels with the term $(-1)^{|u_4|}$ with $u_4=u_4+u_5-u$, because exactly one of $|u_4|$ and $|u_4+u_5-u|$ is odd.  If $u_4+u_5\neq\emptyset$ and $|u_4+u_5|$ is even, and if $i\in u_4+u_5$, we can separately consider the case $i\in u_4$ and $i\notin u_4$ to effectively reduce the number of coordinates to consider from even to odd, and apply the previous argument again.}
\\
Finally, (\ref{eq:IBP}) follows by applying the above lemma to (\ref{eq:IBP_int_4}).
\end{proof}

\bigskip
\section{Measure defined from lower dimensional projections of functions of bounded HK variation}

We show that for all $\c\in\{0,1\}^d$ the measure (\ref{def:measure_projection}) on $[{\bf 0}_I, \1_I]$ defined from the function $f(\cdot;\c_{-I})$, the lower dimensional projection of the function $f$ on $[{\bf 0}_I, \1_I]$, is well defined.  (In fact we can show this for all $\c\in[{\bf 0},\1]$.)  Because $f(\cdot;\c_{-I})$ is obviously right-continuous, it suffices to show that $f(\cdot;\c_{-I})$ is of bounded HK variation on $[{\bf 0}_I, \1_I]$.  By definition of the HK variation, it in turn suffices to show that the Vitali variation of the function $f(\cdot;\1_{I-I'}:\c_{-I})$ on $[{\bf 0}_{I'}, \1_{I'}]$ is finite for all $I'\subset I$ with $I'\neq\emptyset$ (if $I'=\emptyset$ then $f(\cdot;\1_{I-I'}:\c_{-I})$ is just the constant that we added on top of the definition of the original HK variation).

We essentially proceed as in the proof of Lemma~2 (up to roughly the middle of page~17) of Aistleitner~\&~Dick~(2014).  We let $f(\x)=f({\bf 0})+f^+(\x)+f^-(\x)$ be the Jordan decomposition of $f$, so that both $f^+$ and $f^-$ are \textit{completely monotone} in the sense defined at the top of page~11 of Aistleitner~\&~Dick~(2014).  We fix arbitrary $I'\subset I$ with $I'\neq\emptyset$.  By closure of the property of bounded Vitali variation under summation, it suffices to show that the Vitali variations of the functions $f^{\pm}(\cdot;\1_{I-I'}:\c_{-I})$ on $[{\bf 0}_{I'}, \1_{I'}]$ are finite.  Without loss of generality we show this for $f^+(\cdot;\1_{I-I'}:\c_{-I})$.  Because $f^+(\cdot;\1_{I-I'}:\c_{-I})$ is completely monotone, its Vitali variation on $[{\bf 0}_{I'}, \1_{I'}]$ is simply
\begin{align}
\sum_{I''\in I'}(-1)^{|I''|} f^+({\bf 0}_{I''}:\1_{I-I''}:\c_{-I}). \nonumber
\end{align}
By the correspondence of $f^+$ and the measure $\nu^+$, as defined  in Aistleitner~\&~Dick~(2014), the above equals
\begin{align}
\nu^+(\left\{\x\in[{\bf 0},\1]:{\bf 0}_{I'}<\x_{I'}\le\1_{I'}, \x_{I-I'}\le\1_{I-I'}, \x_{-I}\le\c_{-I}\right\}). \nonumber
\end{align}
Since $I'\neq\emptyset$ the set in the parenthesis above is a subset of $[{\bf 0}, \1]-\{{\bf 0}\}$, and so
\begin{align}
\nu^+(\left\{\x\in[{\bf 0},\1]:{\bf 0}_{I'}<\x_{I'}\le\1_{I'}, \x_{I-I'}\le\1_{I-I'}, \x_{-I}\le\c_{-I}\right\})\le \nu^+([{\bf 0}, \1]-\{{\bf 0}\}) = f^+(\1) <\infty, \nonumber
\end{align}
and we are done.\\

\section*{Acknowledgement} The research of Wegkamp is supported in part by the NSF DMS 1310119 grant. \\


\begin{thebibliography}{99}


\bibitem{} C. Aistleitner and J. Dick (2014). Functions of bounded variation, signed measures, and a general Koksma-Hlawka inequality.  {\em Acta Arith.}, {\bf 167}, 143--171, 2015.

%arXiv:1406.0230 [math.CA]


\bibitem{}
 D.W.K.~Andrews and D.P.~Pollard (1994). An Introduction to Functional Central Limit Theorems for Dependent Stochastic Processes. {\em International Statistical Review}, \textbf{62}(1), 119--132.
 
 \bibitem{}
M.A.~Arcones,  and B.~Yu (1994). Central limit theorems for empirical and U-processes of stationary mixing sequences. {\em Journal of Theoretical Probability} \textbf{7}, 47--71.
 

\bibitem{} B. Berghaus, A. B\"ucher, and S. Volgushev. (2014). Weak convergence of the empirical copula process with respect to weighted metrics. 	arXiv:1411.5888 [math.ST]


\bibitem{} A. B\"ucher, J. Segers,  and S. Volgushev (2014).
When uniform weak convergence fails: Empirical processes for dependence functions and residuals via epi- and hypographs"
{\em Annals of Statistics}, {\bf 42}(4), 
1598--1634".


\bibitem{}
A. B\"ucher, and S. Volgushev. (2013). Empirical and sequential empirical copula processes under serial dependence.
{\em Journal of Multivariate Analysis}, 119, 61--70.



\bibitem{deheuv1} {P. Deheuvels} (1979). La fonction de
d\'ependance empirique et ses propri\'et\'es. {\em Acad. Roy.
Belg., Bull. C1 Sci. 5i\`{e}me s\'er.}, {\bf 65}, 274--292.

 \bibitem{} H. Dehling, O. Durieu and D. Volny (2009). New techniques for empirical processes of dependent data. {\em Stochastic Processes and their Applications}, \textbf{119}, 3699--3718.
 
 
 \bibitem{} 
H. Dehling, O. Durieu and M. Tusche (2014). Approximating class approach for empirical processes of dependent sequences indexed by functions. {\em Bernoulli}, \textbf{20}, 1372--1403.
 
 \bibitem{} P. Doukhan, and D. Surgailis (1998). Functional central limit theorem for the empirical process of short memory linear processes. {\em Comptes Rendus de l'Acad\'emie des Sciences, Math\'ematique}, \textbf{326}, 87--92.


\bibitem{} J.-D. Fermanian (1998). Contributions \`a l'Analyse Nonparam\'etrique des Fonctions de Hasard sur Donn\'ees Multivari\'ees et Censur\'ees. {\em PhD. thesis}.



\bibitem{FRW} {J.-D.\,Fermanian, D.\,Radulovi\'c and M.H.\,Wegkamp} (2004). Weak convergence of empirical copula processes. {\em Bernoulli} {\bf
10}, 847--860.

\bibitem{FRW1} {J.-D.\,Fermanian, D.\,Radulovi\'c and M.H.\,Wegkamp} (2014). Asymptotic Total Variation Tests for Copulas. {\em Bernoulli}  (in press).

\bibitem{}
{P. G\"anssler  and W. Stute} (1987).  {\em Seminar on
Empirical Processes.} DMV Seminar, Band 9, Birkh\"auser.


\bibitem{}
G. H. Hardy (1905). On double Fourier series, and especially those which
represent the double zeta-function with real and incommensurable parameters.
{\em Quarterly Journal of Mathematics} 37, 53--79.


\bibitem{hildebr}
{T.H. Hildebrandt} (1963). {\em Introduction to the Theory of Integration}. Academic
Press, New York, London.



\bibitem{}
M. Krause. (1903a). Uber mittelwerts\"atze im gebiete der doppelsummen and
doppelintegrale. {\em Leipziger Ber.} 55, 239--263.

\bibitem{}
M. Krause. (1903b). Uber Fouriersche reihen mit zwei ver"anderlichen gr\"ossen. {\em
Leipziger Ber.} 55, 164--197.


\bibitem{nelsen} {R. B. Nelsen} (1999). {\em An introduction
to copulas}, Lecture Notes in Statistics, {\bf 139}, Springer.

\bibitem{} {A. B. Owen}  (2005).
{\em Multidimensional variation for quasi-{Monte Carlo}}. In {Jianqing Fan and Gang Li}, Editors,
{International Conference on Statistics in honour of
Professor Kai-Tai Fang's 65th birthday},
 {49--74}.

\bibitem{}
 E. Rio (1998). Processus empiriques absolument r\'eguillets et entropie universelle. {\em Probability Thoery and Related Fields}, \textbf{111}, 585--608.
 
 
\bibitem{} {E. Rio} (2000). {\em Th\'{e}orie asymptotique des processus al\'{e}atoires faiblement d\'{e}pendants}, Springer Verlag.

\bibitem{} {L. R\"uschendorf} (1976). Asymptotic normality of multivariate rank order statistics. {\em
Ann. Statist.}, {\bf 4}, 912 -- 923.


\bibitem{Ruym72} {F.H.  Ruymgaart} (1973). {\em Asymptotic Theory for Rank Tests for Independence}, MC-tract 43, Mathematisch Instituut,
Amsterdam.



\bibitem{} {F.H. Ruymgaart} (1974).      Asymptotic normality
 of nonparametric tests for independence. {\em Ann. Statist.}, {\bf 2}, 892 -- 910.

\bibitem{} {F.H. Ruymgaart, G.R. Shorack  and W.R. van Zwet} (1972). Asymptotic normality
 of nonparametric tests for independence. {\em Ann. Math. Statist.} {\bf 43}, 1122 -- 1135.


\bibitem{saks} {S. Saks} (1937). {\em Theory of the Integral}, Hafner Publishing Company.


\bibitem{segers} {J.\,Segers} (2012). Asymptotic of empirical copula processes under
nonrestrictive smoothness assumptions.  {\em Bernoulli} {\bf 18}, 764--782.

\bibitem{} W. Stute (1984). The Oscillation Behavior of Empirical Processes: The Multivariate Case. {\em The Annals of Probability} {\bf  12}, 361--379.
 \bibitem{} S.A.~van~de~Geer. (2000). {\em Empirical Processes in $M$-estimation}, Cambridge University Press.

\bibitem{} H. Tsukahara. (2005). Semiparametric estimation in copula models. {\em Canad. J. Statist. } {\bf 33}, 357--375

\bibitem{vandvw} {A.W. van der   Vaart and J.A. Wellner}
(1996). {\em Weak convergence and empirical processes}, Springer.

\bibitem{vandvw07} {A.W. van der   Vaart and J.A. Wellner}
(2007). Empirical processes indexed by estimated functions. Asymptotics: Particles, Processes and Inverse Problems, 234--252, Institute of Mathematical Statistics, Beachwood, Ohio, USA.
\end{thebibliography}
\end{document}